   \newcommand{\Cdb}{\mbox{$\mathbb{C}$}}
   \newcommand{\Ndb}{\mbox{$\mathbb{N}$}}
   \newcommand{\Rdb}{\mbox{$\mathbb{R}$}}
   \newcommand{\Zdb}{\mbox{$\mathbb{Z}$}}
   \newcommand{\A}{\mbox{${\mathcal A}$}}
   \newcommand{\E}{\mbox{${\mathcal E}$}}
   \renewcommand{\H}{\mbox{${\mathcal H}$}}
    \newcommand{\M}{\mbox{${\mathcal M}$}}
\newcommand{\norm}[1]{\Vert#1\Vert}
\newcommand{\bignorm}[1]{\bigl\Vert#1\bigr\Vert}
\newtheorem{theorem}{Theorem}[section]
\newtheorem{lemma}[theorem]{Lemma}
\newtheorem{corollary}[theorem]{Corollary}
\newtheorem{proposition}[theorem]{Proposition}
\newtheorem{definition}[theorem]{Definition}
\theoremstyle{remark}
\newtheorem{remark}[theorem]{\bf Remark}
\theoremstyle{definition}
\numberwithin{equation}{section}
\begin{document}

\title[Absolutely dilatable Schur multipliers]
{A characterization of absolutely dilatable Schur multipliers}

\author[C. Duquet]{Charles Duquet}
\email{charles.duquet@univ-fcomte.fr}
\address{Laboratoire de Math\'ematiques de Besan\c con,  Universit\'e de Franche-Comt\'e,
16 Route de Gray, 25030 Besan\c{c}on Cedex, FRANCE}

\author[C. Le Merdy]{Christian Le Merdy}
\email{clemerdy@univ-fcomte.fr}
\address{Laboratoire de Math\'ematiques de Besan\c con,  Universit\'e de Franche-Comt\'e,
16 Route de Gray, 25030 Besan\c{c}on Cedex, FRANCE}

\date{\today}

\maketitle

\begin{abstract} Let $M$ be a von Neumann algebra equipped with
a normal semi-finite faithful trace (nsf trace in short) and let $T\colon M
\to M$ be a contraction. We
say that $T$ is absolutely dilatable 
if there exist another 
von Neumann algebra $M'$
equipped with a nsf trace,
a $w^*$-continuous trace preserving unital
$*$-homomorphism $J\colon M\to M'$
and a trace preserving $*$-automomorphism $U\colon M'\to M'$ such that 
$T^k=E U^k J$ for all integer $k\geq 0$, where $E\colon M'\to M$ is the 
conditional expectation associated with $J$. 
Given a $\sigma$-finite measure space
$(\Omega,\mu)$, we characterize bounded Schur multipliers $\phi\in L^\infty(\Omega^2)$ 
such that the Schur multiplication operator $T_\phi\colon B(L^2(\Omega))\to B(L^2(\Omega))$
is absolutely dilatable. In the separable case, they are characterized by the
existence of a von Neumann algebra $N$ with a separable predual, equipped with a normalized
normal faithful trace $\tau_N$,  and  a
$w^*$-measurable essentially bounded function $d\colon\Omega\to N$ such that
$\phi(s,t)=\tau_N(d(s)^*d(t))$ for almost every $(s,t)\in\Omega^2$.
\end{abstract}

\vskip 0.5cm
\noindent
{\it 2000 Mathematics Subject Classification:} Primary 47A20, secondary 46L06, 46E40.

\smallskip
\noindent
{\it Key words:} Schur products, dilations, tensor products.

\section{Introduction} 
Akcoglu’s dilation theorem \cite{A,AS} asserts that for any $1<p<\infty$, for any measure space
$(\Omega,\mu)$ and for any positive contraction $T\colon L^p(\Omega)\to L^p(\Omega)$, 
there exist
another measure space $(\Omega',\mu')$, an isometry $J\colon 
L^p(\Omega)\to L^p(\Omega')$, an isometric isomorphism
$U\colon L^p(\Omega')\to L^p(\Omega')$ and a contraction  $Q\colon L^p(\Omega')\to
L^p(\Omega)$ such that $T^k=QU^kJ$ for all integer $k\geq 0$. This result is of paramount
importance in various topics concerning $L^p$-operators. On the one hand, it was 
at the root of the sharpest results about $H^\infty$ functional calculus for either
single operators or semigroups of operators 
on $L^p$-spaces. See in particular \cite{AFL, CP, HP}
as well as \cite{KW}, where Akcoglu’s theorem is combined with Fourier multiplier
theorems to provide a bounded $H^\infty$ functional calculus for various differential
operators. On the other hand, it plays a key role in the harmonic analysis of semigroups,
ergodic theory and maximal inequalities on $L^p$-spaces, see in particular
Cowling's seminal paper
\cite{Cow}, the more recent articles \cite{CCL,
LMX, Xu}, as well as \cite[Section I]{HVVW2} and
the references therein.

In the last two decades, operator theory and harmonic analysis on 
non-commutative $L^p$-spaces gained a lot
of interest and many classical results concerning $L^p(\Omega)$-spaces have been transferred
to the non-commutative setting (see \cite{JMP} for an introduction). 
However Akcoglu’s theorem does not extend to 
non-commutative $L^p$-spaces. More precisely, let $M$ be a von Neumann 
algebra equipped with a normal semi-finite faithful trace $\tau$. In this situation, we say that
$(M,\tau)$ is a tracial von Neumann algebra. Let $1<p<\infty$ and let
$L^p(M)$ be the associated non-commutative $L^p$-space. Let us say  that a 
contraction $T\colon L^p(M)\to L^p(M)$ admits an isometric 
$p$-dilation if there exist another  tracial von Neumann algebra $(M',\tau')$,
an isometry $J\colon L^p(M)\to L^p(M')$,  an isometric isomorphism
$U\colon L^p(M')\to L^p(M')$  and a contraction $Q \colon  L^p(M')\to L^p(M)$  such that 
$T^k=QU^kJ$ for all integer $k\geq 0$. 
Then it was shown in \cite{JLM} that not all positive contractions 
$T\colon L^p(M)\to L^p(M)$ admit an isometric 
$p$-dilation. After this negative result was published, it became important 
to exhibit classes of positive contractions
on non-commutative $L^p$-spaces which do admit an isometric 
$p$-dilation. Such examples appeared in \cite[Chapters 9-10]{JLX},
including contractions on Clifford algebras (more generally, on
$q$-deformed von Neumann algebras) arising from second quantization,
and the non-commutative Poisson semigroup. See also 
\cite[Section 8]{LM} and \cite{Arh1,Arh2, Arh4, Arh3} for various 
dilation results for classes of either Schur or (non-commutative) 
Fourier multipliers. We also mention \cite{HRW} for a recent isometric
$p$-dilation property for Lamperti type operators on non-commutative $L^p$-spaces,
which leads to a (non-commutative) ergodic maximal inequality.

We are interested in the notion of absolute dilation. 
Consider $(M,\tau)$ as above and
let $T\colon M\to M$ be a contraction. We say that $T$ 
is absolutely dilatable (or admits an absolute dilation)
if there exist another tracial von Neumann algebra $(M',\tau')$, 
a $w^*$-continuous trace preserving unital
$*$-homomorphim $J\colon M\to M'$
and a trace preserving $*$-automomorphim $U\colon M'\to M'$ such that 
$T^k= E U^k J$ for all $k\geq 0$, where $E\colon M'\to M$ 
is the conditional expectation associated with $J$. If $M$ has 
a separable predual and the above property holds for some 
$M'$ with a separable predual, we say that  $T$ admits a separable
absolute dilation.
It turns out that if $T\colon M\to M$ 
is absolutely dilatable, then for all $1<p<\infty$, $T$ extends 
to a positive contraction on $L^p(M)$
which admits an isometric 
$p$-dilation (see Proposition \ref{2Abs-p}). Absolutely dilatable operators
were formally introduced in \cite{Du}. However they appear implicitly in
\cite{Ku, HM} (without any reference to non-commutative $L^p$-spaces)
as well as in  \cite{Arh1, Arh2, JLX}, where they are used for the purpose of 
obtaining isometric
$p$-dilations. Indeed, many known constructions of isometric 
$p$-dilations for specific classes
of $L^p(M)$-operators arise from  absolute dilations. Also,
absolute dilations are closely related
to the so-called factorizable operators \cite{AD, HM, JRS, Ric}.
As noticed in Remark \ref{2Dil2}, (4), absolute dilations generalize 
factorizability to non-normalized tracial von Neumann algebras.

This paper is concerned with Schur multipliers acting on 
$B(L^2(\Omega))$, the space of bounded operators on 
the $L^2$-space associated with a $\sigma$-finite measure space $(\Omega,\mu)$.
The study of bounded Schur multipliers in this setting goes back at least 
to Haagerup \cite{Haa} and Peller \cite{Pe1} (see also Spronk \cite{Spronk}).
We refer to Section \ref{S-Schur} for definitions
and background. Schur multipliers are fundamental objects in various parts
of non-commutative analysis. In particular, they play a key role in the theory
of multiple operator integrals and their applications to perturbation theory (see
\cite{Pe2} for a survey), for the study of approximation properties of operator algebras
(see e.g. \cite{DL}) and, using transference results, for the study of Fourier
multipliers. We refer to \cite{CGPT, PRD} for recent  outstanding advances on
Schur multipliers and to \cite{AK} for applications of dilations of Schur multipliers.

The papers \cite{Arh1,Du} show that a positive unital
Schur multiplier operator $T_\phi\colon B(L^2(\Omega))\to B(L^2(\Omega))$ 
associated with a real-valued $\phi\in L^\infty(\Omega^2)$ 
is absolutely dilatable. Further the Haagerup-Musat paper
\cite{HM} provides a characterization of
absolutely dilatable Schur multipliers in the finite
dimensional setting, see Remark \ref{5FD}. The objective of the present paper is to establish
a  characterization of
absolutely dilatable Schur multipliers $B(L^2(\Omega))\to B(L^2(\Omega))$
in the general case.

Let us start with the discrete case. For any $n\geq 1$, let $M_n$
denote the space of $n\times n$ matrices with complex entries. Let 
${\frak m}=\{m_{ij}\}_{i,j\geq 1}$ be a family of complex numbers. Recall that  
${\frak m}$ is a bounded Schur multiplier if there exists a (necessarily unique)
$w^*$-continuous operator $T_{\frak m}\colon B(\ell^2)\to B(\ell^2)$
such that $T_{\frak m}\bigl([a_{ij}]\bigr) = [m_{ij}a_{ij}]$
for all $n\geq 1$ and all $[a_{ij}]\in M_n\subset B(\ell^2)$.
We show that 
$T_{\frak m}$ admits an absolute dilation if and only if  
there exist a normalized
tracial von Neumann algebra
$(N,\tau_N)$ with separable predual and a sequence $(d_k)_{k\geq 1}$
of unitaries of $N$ such that 
\begin{equation}\label{1TraceForm0}
m_{ij}=\tau_N(d_i^*d_j),\qquad i,j\geq 1.
\end{equation}
In this case, $T_{\frak m}$ actually admits a separable absolute dilation, see Corollary \ref{5Discrete}.

In the general separable case, spaces of bounded families
with entries in a von Neumann algebra $N$ with a separable predual are replaced
by  spaces $L^\infty_\sigma(\Omega;N)$ consisting of 
(classes of) $w^*$-measurable essentially bounded  functions $\Omega\to N$.
These spaces are used e.g. 
in \cite[Theorem 1.22.13]{Sak} and we have an identification
$L^\infty_\sigma(\Omega;N)\simeq L^\infty(\Omega)\overline{\otimes} N$, see 
Section \ref{SNormal} for details and information.

In the sequel, we say that $(\Omega,\mu)$ is separable when
the Hilbert space $L^2(\Omega)$
is separable. Our main result is the following. Note that is this general case, (\ref{1TraceForm}) 
replaces (\ref{1TraceForm0}).

\begin{theorem}\label{1Main}
Assume that $(\Omega,\mu)$ is separable and let 
 $\phi\in L^\infty(\Omega^2)$. The following assertions are equivalent.
\begin{itemize}
\item [(i)] The function $\phi$ is a bounded Schur multiplier and 
$T_\phi\colon B(L^2(\Omega))\to B(L^2(\Omega))$ admits a separable absolute dilation.
\item [(ii)] There exist a normalized tracial von Neumann algebra $(N,\tau_N)$ 
with a separable predual and a unitary $d\in L^\infty_\sigma(\Omega; N)$
such that 
\begin{equation}\label{1TraceForm}
\phi(s,t) =\tau_N\bigl(d(s)^*d(t)\bigr), 
\qquad{\rm for\  a.e.}\ (s,t)\in\Omega^2.
\end{equation}
\end{itemize}
\end{theorem}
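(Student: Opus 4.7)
The plan is to prove the two implications by rather different techniques; $(ii)\Rightarrow(i)$ is an explicit construction, while $(i)\Rightarrow(ii)$ is a structural analysis of the absolute dilation.

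For $(ii)\Rightarrow(i)$, I construct a Bernoulli-type dilation. Let $R:=\overline{\bigotimes}_{k\in\Zdb}(N,\tau_N)$ be the infinite tensor product with its normal faithful tracial state $\overline\tau=\bigotimes_k\tau_N$, and set $M':=B(L^2(\Omega))\overline{\otimes} R$ with trace $\tau':=\mathrm{tr}\otimes\overline\tau$. Let $J(x):=x\otimes 1_R$ (trace-preserving $w^*$-continuous unital $*$-homomorphism) and $E:=\mathrm{id}\otimes\overline\tau$ (its conditional expectation). View $d$ as a unitary of $L^\infty_\sigma(\Omega;R)\subset M'$ supported in the $0$-th tensor slot of $R$, and let $\tilde\sigma\in\mathrm{Aut}(M')$ be the Bernoulli shift on $R$ extended trivially to $B(L^2(\Omega))$. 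Put $U:=\mathrm{Ad}(d^*)\circ\tilde\sigma$; this is a trace-preserving $*$-automorphism of $M'$. An induction yields $U^kJ(x)=D_k^*(x\otimes 1)D_k$ with $D_k:=\tilde\sigma^{k-1}(d)\cdots\tilde\sigma(d)\,d$. Because the factors $\tilde\sigma^j(d(s))$ occupy distinct tensor slots of $R$, the factorization of $\overline\tau$ through $\bigotimes\tau_N$ gives $\overline\tau(D_k(s)^*D_k(t))=\tau_N(d(s)^*d(t))^k=\phi(s,t)^k$, and applying $E$ to the operator-valued kernel $D_k(s)^*x(s,t)D_k(t)$ returns the scalar kernel $\phi(s,t)^kx(s,t)$, i.e.\ $T_\phi^k(x)$.

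For $(i)\Rightarrow(ii)$, start from a separable absolute dilation $(M',\tau',J,U,E)$. \emph{Step 1.} Since $J(M)\simeq B(L^2(\Omega))$ is a type~I factor unitally included in $M'$ and $E$ is a trace-preserving conditional expectation, the standard tensor-splitting theorem gives an identification $M'\simeq B(L^2(\Omega))\overline{\otimes} N$ with $J(x)=x\otimes 1$, $E=\mathrm{id}\otimes\tau_N$ and $\tau'=\mathrm{tr}\otimes\tau_N$, where $N:=J(M)'\cap M'$ is normalized tracial with separable predual. \emph{Step 2.} The map $UJ\colon M\to M'$ is another unital normal trace-preserving $*$-homomorphism, with commutant $UJ(M)'\cap M'=U(N)\simeq N$. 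Invoking the classification of normal unital representations of $B(L^2(\Omega))$ sharing the same commutant yields a unitary $u\in M'$ with $UJ(x)=u^*(x\otimes 1)u$, so that $T_\phi(x)=(\mathrm{id}\otimes\tau_N)(u^*(x\otimes 1)u)$. \emph{Step 3.} The dilation forces $T_\phi$ to be unital, hence $\phi(s,s)=1$ a.e., and $T_\phi$ restricts to the identity on the MASA $L^\infty(\Omega)\subset B(L^2(\Omega))$. Setting $\pi(x):=u^*(x\otimes 1)u$, one has for every unitary multiplication operator $M_f$ with $f\in L^\infty(\Omega)$ that $\pi(M_f)^*\pi(M_f)=1$ and, by Kadison-Schwarz applied to $E$, $E(\pi(M_f)^*\pi(M_f))\geq E(\pi(M_f))^*E(\pi(M_f))=T_\phi(M_f)^*T_\phi(M_f)=1$. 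Equality throughout forces $\pi(M_f)$ into the multiplicative domain of $E$, which equals $B(L^2(\Omega))\otimes 1$; together with $E(\pi(M_f))=M_f$ this gives $\pi(M_f)=M_f\otimes 1$. Thus $u$ commutes with $L^\infty(\Omega)\otimes 1$ inside $M'$, and maximal-abelianness of $L^\infty(\Omega)$ in $B(L^2(\Omega))$ yields $u\in L^\infty(\Omega)\overline{\otimes} N\simeq L^\infty_\sigma(\Omega;N)$. Setting $d:=u$ and computing $(\mathrm{id}\otimes\tau_N)(d^*(x\otimes 1)d)$ via integral kernels produces (\ref{1TraceForm}).

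The main obstacle I anticipate is Step~3: the reduction to the multiplicative domain is a clean consequence of Kadison-Schwarz, but correctly identifying that domain and leveraging the Schur-multiplier property of $T_\phi$ (through its triviality on the diagonal MASA) is the crux of the whole equivalence. A second, more technical point is to verify that the element $u\in L^\infty(\Omega)\overline{\otimes} N$ so produced corresponds to an honest $w^*$-measurable unitary-valued function $d\colon\Omega\to N$, for which separability of both $L^2(\Omega)$ and the predual of $M'$, together with the identification $L^\infty(\Omega)\overline{\otimes} N\simeq L^\infty_\sigma(\Omega;N)$ recalled in the introduction, are essential.
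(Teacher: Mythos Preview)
Your construction for $(ii)\Rightarrow(i)$ is exactly the Bernoulli dilation the paper uses, and Step~1 of $(i)\Rightarrow(ii)$ (the tensor splitting $M'\simeq B(L^2(\Omega))\overline\otimes N$ with $J(x)=x\otimes 1$) also matches. The problem lies in Step~2.

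\textbf{The gap.} You assert that the two unital normal embeddings $J$ and $UJ$ of $B(L^2(\Omega))$ into $M'$ are conjugate by a unitary $u\in M'$. This amounts to $J(e_{11})\sim UJ(e_{11})$ in $M'$, and that is \emph{not} a consequence of the hypotheses. The relative commutants $J(B(L^2))'\cap M'=1\otimes N$ and $UJ(B(L^2))'\cap M'=U(1\otimes N)$ are isomorphic via $U$ but need not coincide, so ``sharing the same commutant'' is already inaccurate. More seriously, equivalence of $p=e_{11}\otimes 1$ and $q=U(e_{11}\otimes 1)$ requires equality of their \emph{center-valued} traces, and $T(q)=U(T(p))$ need not equal $T(p)$ when $U$ acts nontrivially on $Z(M')=1\otimes Z(N)$. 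Concretely, if $N=L^\infty[0,1]\overline\otimes R$ (so each central fiber of $M'$ is a II$_\infty$ factor with trace-scaling automorphisms), one can build a trace-preserving automorphism $U$ of $M'$ combining a non-measure-preserving reparametrisation of $[0,1]$ with fiberwise trace-scaling, for which $q(t)$ has $\tau$-value $\neq 1$ on a set of positive measure, hence $p\not\sim q$. Your Step~2 argument is purely structural and invokes no Schur-multiplier property, so it cannot rule this out.

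\textbf{How the paper handles it.} The paper never claims an inner intertwiner. Instead it realises $N_1\subset B(H_1)$, builds a second corner $N_2=q_0 M' q_0\subset B(H_2)$ from the matrix units $\{UJ(E_{ij})\}$, and obtains a \emph{spatial} unitary $D\colon L^2(\Omega;H_1)\to L^2(\Omega;H_2)$ implementing a trace-preserving isomorphism $\pi$, with $U(z\otimes 1_{N_1})=D^*(z\otimes 1_{N_2})D$. It then proves directly (Lemma~5.4, via trace computations with indicator functions) that $D\in L^\infty(\Omega)\overline\otimes B(H_1,H_2)$, shows $(1_t\otimes D_s^*)(1_s\otimes D_t)\in L^\infty(\Omega^2)\overline\otimes N_1$, and finally uses separability to pick $s_0$ and pass to $\tilde d(t)=d(s_0)^*d(t)\in N_1$. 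Your Kadison--Schwarz / multiplicative-domain argument in Step~3 is a genuinely elegant alternative to the paper's indicator-function computation, and it \emph{does} transplant to the spatial setting: from $E\bigl(D^*(M_f\otimes 1_{N_2})D\bigr)=T_\phi(M_f)=M_f$ and equality in Kadison--Schwarz one gets $D^*(M_f\otimes I_{H_2})D=M_f\otimes I_{H_1}$, hence $D\in L^\infty(\Omega)\overline\otimes B(H_1,H_2)$. But you still need the spatial $D$ between two Hilbert spaces and the subsequent $s_0$-reduction to land in $N$; as written, your Step~2 short-circuits exactly this and is where the proof breaks.
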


Sections 2-4 provide all the necessary background and preliminary results on dilations, 
$L^\infty_\sigma(\Omega;\bullet)$-spaces, tensor products and
Schur multipliers. The proof of Theorem \ref{1Main} is given in Section \ref{SProof}.
It relies on various preparatory results presented in 
Section \ref{Prep}. The final Section \ref{S-NSC} is dedicated to the
non separable case. We give a version of Theorem \ref{1Main} 
for Schur multipliers which admit a (not necessarily separable) absolute
dilation. We also discuss the discrete case in the last two sections.

\section{Dilations}\label{SDil}
Let $M$ be a von Neumann algebra. We let $M^+$ denote its positive
part and unless otherwise stated, we let $1_M$ denote its unit.
If $M$ is equipped with a normal semi-finite faithful trace
$\tau_M\colon M^+\to[0,\infty]$, we say that $(M,\tau_M)$ is 
a tracial von Neumann. We further say that $(M,\tau_M)$  is normalized if 
$\tau_M(1_M)=1$. In this case, $\tau_M$ is finite and extends
to a normal tracial state on $M$.

Given any tracial von Neumann algebra $(M,\tau_M)$ acting on some Hilbert 
space $H$,  let 
$L^0(M,\tau_M)$ denote the $*$-algebra of all possibly unbounded
operators on $H$ which are affiliated with $M$. Let
$\widetilde{\tau_M}\colon L^0(M,\tau_M)^+\to[0,\infty]$ denote the natural 
extension of $\tau_M$ to the positive part of $L^0(M,\tau_M)$.
Then for all 
$1\leq p<\infty$, the non-commutative $L^p$-space $L^p(M,\tau_M)$
is defined as the space of all $x\in L^0(M,\tau_M)$ such that 
$\widetilde{\tau_M}(\vert x\vert^p)<\infty$. This is a Banach space for the norm
$\norm{x}_p=(\widetilde{\tau_M}(\vert x\vert^p))^{\frac{1}{p}}$.
For convenience, we write
$L^\infty(M,\tau_M)=M$. It turns out that $M\cap L^p(M,\tau_M)$
is dense in $L^p(M,\tau_M)$.

The trace $\tau_M$ is finite on $L^1(M,\tau_M)\cap M^+$
and extends to a functional on $L^1(M,\tau_M)$, still
denoted by $\tau_M$. Let $1\leq p<\infty$
and let $p'=\frac{p}{p-1}$ be its conjugate index. For any 
$x\in L^p(M,\tau_M)$ and any $y\in L^{p'}(M,\tau_M)$, the product
$xy$ belongs to $L^1(M,\tau_M)$  and we have a H\"older inequality $\vert \tau_M(xy)
\vert \leq  \norm{x}_p\norm{y}_{p'}$. 
We further have an isometric
isomorphism 
$$
L^{p}(M,\tau_M)^*\simeq
L^{p'}(M,\tau_M),
$$
for the duality pairing given by
$\langle y,x\rangle = \tau_M(xy)$ for all  $x\in L^{p}(M,\tau_M)$
and $y\in L^{p'}(M,\tau_M)$. In particular, 
\begin{equation}\label{2Dual}
M\simeq
L^1(M,\tau_M)^*.
\end{equation}

The space $L^2(M,\tau_M)$ is a Hilbert space, with inner product given by
$(x\vert y)=\tau_M(xy^*)$ for all $x,y\in L^2(M,\tau_M)$.

Let $H$ be a Hilbert space and let $B(H)$ denote the von Neumann
algebra of all bounded operators on $H$.  
Let ${\rm tr}\colon B(H)^+\to[0,\infty]$ denote the usual trace. 
Then for all $1\leq p<\infty$,
$L^{p}(B(H), {\rm tr})$ is  equal to the Schatten $p$-class  $S^p(H)$.  
We note that $S^2(H)$ coincides with the space 
of Hilbert-Schmidt operators on $H$.

The reader is referred to \cite[Section V.2]{Tak}  for information on traces and 
to the survey paper \cite{PX} and to \cite[Chapter I]{Terp} and \cite{FK}  for details on non-commutative $L^p$-spaces
(see also \cite[Section 4.3]{Hiai} and \cite{Kos}).

Let $(M_1,\tau_{M_1})$ and $(M_2,\tau_{M_2})$ be two tracial von Neumann algebras
and let $U\colon M_1\to M_2$ be a positive map, that is,
$U(M_{1}^+)\subset M_{2}^+$. We say that $U$ is trace preserving if
$\tau_{M_2}\circ U = \tau_{M_1}$ on $M_{1}^+$. If
$U$ is a trace preserving $*$-homomorphim, then $U$ is 1-1 (because $\tau_{M_1}$
is faithful) and it is well-known that  for all $1\leq p<\infty$,
the restriction of $U$ to $M\cap L^p(M,\tau_M)$
extends to an isometry 
$$
U_p\colon L^p(M_1,\tau_{M_1})\longrightarrow L^p(M_2,\tau_{M_2}).
$$
Moreover $U_p$ is onto if $U$ is onto. We refer to \cite[Theorem 2] {Y} for this fact and to
\cite[Lemma 1.1]{JX} for complementary results.

\begin{definition}\label{2Dil0}
Let $(M,\tau_M)$ be a tracial von Neumann algebra. 
We say that an operator  $T\colon M\to M$
is {\bf absolutely dilatable}, or admits an
{\bf absolute dilation}, if there exist another tracial von Neumann algebra
$(\M,\tau_{\mathcal M})$, a $w^*$-continuous trace preserving unital
$*$-homomorphim $J\colon M\to \M$
and a trace preserving $*$-automomorphim $U\colon \M\to \M$ such that 
\begin{equation}\label{2Dil1}
T^k=J_1^* U^k J,\qquad k\geq 0.
\end{equation} 
Here $J_1\colon L^1(M,\tau_M)\to L^1(\M,\tau_{\mathcal M})$ stands for the isometric
extension of $J$ and $J_1^*\colon\M\to M$ is defined using
(\ref{2Dual}).

If $M$ has a separable predual, we say that $T\colon M\to M$
admits a {\bf separable absolute dilation} if the above property holds with some
$\M$ with a separable predual.
\end{definition}

\begin{remark}\label{2Dil2}
Here are a few comments about the above definition.

\smallskip
(1) Let $J\colon M\to \M$ be a  trace preserving
$*$-homomorphim. Then for all $x\in M$ and all
$y\in M\cap L^1(M,\tau_M)$, we have 
$$
\langle x,y\rangle = \tau_M(xy)=\tau_{\mathcal M}(J(xy))
=\tau_{\mathcal M}(J(x)J_1(y)) = \langle J(x), J_1(y)\rangle.
$$
This implies that $J_1^*J=I_{M}$, the identity operator on $M$. Hence the relation 
(\ref{2Dil1}) for $k=0$ is automatic. Thus (\ref{2Dil1}) is significant for $k\geq 1$ only.
See Remark \ref{5k1} for more about this.

\smallskip
(2) If $J\colon M\to \M$ is  a $w^*$-continuous trace preserving  unital 
$*$-homomorphim, then $M$ can be regarded (through $J$) as a von Neumann subalgebra
of $\M$ whose trace $\tau_M$ is the restriction of $\tau_{\mathcal M}$. In this respect,
$J_1^*\colon\M \to M$ is the natural conditional expectation onto $M$
(see e.g. \cite[Proposition V.2.36]{Tak}). In particular, $J_1^*$ is positive, 
trace preserving and unital.

\smallskip 
(3) If $T\colon M\to M$ is absolutely dilatable, then $T$ is necessarily positive,
trace preserving and unital. This follows from the factorization $T=J_1^*UJ$ and 
the previous paragraph.

\smallskip 
(4) Assume that  $(M,\tau_M)$ is normalized. According to \cite[Theorem 4.4]{HM}, 
an operator  $T\colon M\to M$ is factorizable with respect to $\tau_M$, in the sense of \cite{AD}, if and only if
it is absolutely dilatable and the tracial von Neumann algebra
$(\M,\tau_{\mathcal M})$ in Definition \ref{2Dil0} can be chosen normalized. Thus,
absolute dilatability should be regarded as a non-normalized version of factorizability.

\smallskip 
(5) A von Neumann algebra $M$ has a separable predual if and only if there exists a separable Hibert space
$H$ such that $M\subset B(H)$ as a von Neumann subalgebra.
\end{remark}

Let $(M,\tau_M)$ be a tracial von Neumann algebra, let 
$1\leq p<\infty$ and let $T_0\colon L^p(M,\tau_M)\to L^p(M,\tau_M)$
be a contraction. As indicated in the introduction, we say that $T_0$ admits an isometric $p$-dilation if there
exist another tracial von Neumann algebra
$(\M,\tau_{\mathcal M})$, an isometry $J_0\colon 
L^p(M,\tau_M)\to L^p(\M,\tau_{\mathcal M})$, an isometric isomorphism
$U_0\colon L^p(\M,\tau_{\mathcal M})\to L^p(\M,\tau_{\mathcal M})$ 
and a contraction $Q_0\colon  L^p(\M,\tau_{\mathcal M})
\to L^p(M,\tau_M)$ such that $T_0^k=Q_0U_0^kJ_0$ for all $k\geq 0$.

\begin{proposition}\label{2Abs-p}
Let $T\colon M\to M$ be  an absolutely dilatable operator and let 
$1\leq p<\infty$.  Then the restriction of $T$ to $M\cap L^p(M)$ extends to a positive contraction 
$T_0\colon L^p(M,\tau_M)\to L^p(M,\tau_M)$ and the latter 
admits an isometric $p$-dilation.
\end{proposition}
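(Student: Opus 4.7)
The plan is to transfer the absolute dilation of $T$ from the von Neumann algebra level to an isometric $p$-dilation at the $L^p$ level. Starting from the dilation data $(\M,\tau_\M,J,U)$ with conditional expectation $E=J_1^*\colon\M\to M$ as in Remark \ref{2Dil2}(2), I would extend $J$ to an isometry $J_p\colon L^p(M,\tau_M)\to L^p(\M,\tau_\M)$ and $U$ to an isometric isomorphism $U_p\colon L^p(\M,\tau_\M)\to L^p(\M,\tau_\M)$ by invoking \cite[Theorem 2]{Y}. The conditional expectation $E$, being trace preserving, positive and unital, extends to a positive contraction $E_p\colon L^p(\M,\tau_\M)\to L^p(M,\tau_M)$ by a standard argument using that $E$ is the adjoint of $J_1\colon L^1(M,\tau_M)\to L^1(\M,\tau_\M)$ together with interpolation against the contraction $E\colon \M\to M$. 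I would then set $T_0:=E_p U_p J_p$, which is a positive contraction as a composition of positive contractions.

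Next, I would verify that $T_0$ extends $T|_{M\cap L^p(M,\tau_M)}$. For $x\in M\cap L^p(M,\tau_M)$, the trace preservation of $J$ gives $J(x)\in \M\cap L^p(\M,\tau_\M)$ and $J_p(x)=J(x)$; the analogous identities hold for $U_p$ and $E_p$ when applied to elements of $\M\cap L^p(\M,\tau_\M)$. Hence $T_0(x)=E(U(J(x)))=T(x)$, and moreover $T(x)\in M\cap L^p(M,\tau_M)$. Consequently $T$ preserves this dense subspace, and by a straightforward induction $T_0^k(x)=T^k(x)$ for every $k\geq 0$ and every $x\in M\cap L^p(M,\tau_M)$.

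Finally, I would take $J_0:=J_p$, $U_0:=U_p$, $Q_0:=E_p$ and prove the identity $T_0^k=E_p U_p^k J_p$ for all $k\geq 0$, which gives the required isometric $p$-dilation. Both sides are bounded operators on $L^p(M,\tau_M)$, so it suffices to check equality on the dense subspace $M\cap L^p(M,\tau_M)$. On this subspace the left side equals $T^k$ by the previous paragraph, while the right side equals $EU^kJ=T^k$ by the absolute dilation identity (\ref{2Dil1}), together with the fact that each $L^p$-extension agrees with the original map on the appropriate intersection with the von Neumann algebra.

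The principal technical obstacle lies in this last step: one cannot simply telescope $(E_pU_pJ_p)^k$ in $L^p$, since the idempotent $J_pE_p$ (projecting onto the range of $J_p$) need not commute with $U_p$. The correct strategy is to bypass the telescoping and instead lift the von Neumann level identity $T^k=EU^kJ$ across the dense inclusion $M\cap L^p(M,\tau_M)\subset L^p(M,\tau_M)$, which is made possible by the bookkeeping in the second paragraph showing that $T$ preserves this subspace and that the $L^p$-extensions restrict correctly there.
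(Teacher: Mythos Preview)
Your proof is correct and follows the same approach as the paper: define $T_0 = E_p U_p J_p$ (written $J_{p'}^* U_p J_p$ in the paper, which is the same map since $E_p$ coincides with the adjoint of $J_{p'}$) and verify $T_0^k = E_p U_p^k J_p$ by reducing to the dense subspace $M\cap L^p(M)$ where the von Neumann level identity (\ref{2Dil1}) applies. Your write-up is considerably more detailed than the paper's terse three-line argument; in particular, your explicit discussion of why one cannot simply telescope $(E_pU_pJ_p)^k$ and must instead pass through density is a point the paper leaves entirely implicit.
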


\begin{proof}
Let $(\M,\tau_{\mathcal M})$, $J$ and $U$ as in Definition \ref{2Dil0}.
Then $T_0= J_{p'}^*U_pJ_p$ is a positive contraction which 
coincides with $T$ on $M\cap L^p(M)$.
Furthermore, we have
$T_0^k= J_{p'}^*U_p^kJ_p$ for all $k\geq 0$. Since 
$J_p$ and $U_p$ are isometries, we obtain that 
$T_0$ admits an isometric $p$-dilation.
\end{proof}

\section{$L^\infty_\sigma$-spaces and normal tensor products}\label{SNormal}
Let $(\Omega,\mu)$ be a $\sigma$-finite measure space. For any $1\leq p\leq \infty$,
we simply let $L^p(\Omega)$ denote the associated $L^p$-space
$L^p(\Omega,\mu)$. In integration formulas, $d\mu(t)$ will be abreviated 
to $dt$.

Given any Banach space $Y$ and any $1 \leq p\leq \infty$, we let $L^p(\Omega;Y)$
denote the Bochner space of all measurable functions $F\colon\Omega\to Y$, 
defined up
to almost everywhere zero functions, such that the norm function $t\mapsto\norm{F(t)}_Y$
belongs to $L^p(\Omega)$. 
This is a Banach space for the norm $\norm{F}_p$, 
defined as the $L^p(\Omega)$-norm of $\norm{F(\,\cdotp)}_Y$ 
(see \cite[Chapters I and II]{DU}). 
We note that if $p\not=\infty$, $L^p(\Omega)\otimes Y$
is a dense subspace of $L^p(\Omega;Y)$. Furthermore if
$Y=H$ is a Hilbert space, then the natural embedding of 
$L^2(\Omega)\otimes H$ into $L^2(\Omega;H)$
extends to a unitary identification
\begin{equation}\label{Hilbert-TP}
L^2(\Omega;H) \simeq L^2(\Omega)\stackrel{2}{\otimes} H,
\end{equation}
where $\stackrel{2}{\otimes}$ stands for the Hilbertian tensor product.

Let $X$ be a separable Banach space. We say that a function $d\colon\Omega\to X^*$ 
is $w^*$-measurable if for all $x\in X$, the scalar function $t \mapsto 
\langle d(t),x\rangle$ is measurable on $\Omega$. The latter function is denoted by 
$\langle d,x\rangle$ for simplicity.
If $d\colon\Omega\to X^*$ 
is $w^*$-measurable, then the norm function $t\mapsto \norm{d(t)}$ is measurable on  $\Omega$. 
Indeed if $(x_n)_{n\geq 1}$ is a dense sequence in the unit ball
of $X$, then 
$\norm{d(\,\cdotp)} =\sup_n\vert\langle d,x_n\rangle\vert$.
We let $L^\infty_\sigma(\Omega;X^*)$ denote the space of all
$w^*$-measurable functions $d\colon\Omega\to X^*$, defined up
to almost everywhere zero functions, such that 
$\norm{d(\,\cdotp)}$ is essentially bounded (in this case, we imply
say that $d$ is essentially bounded).
This is a Banach space
for the norm 
$$
\norm{d}_\infty : = \bignorm{\norm{d(\,\cdotp)}}_{L^\infty(\Omega)}.
$$
Here the subscript $\sigma$ refers to the $w^*$-topology $\sigma(X^*,X)$ on 
$X^*$.

Let $F\in L^1(\Omega;X)$ and let $d\in L^\infty_\sigma(\Omega;X^*)$.
Using the fact that $L^1(\Omega)\otimes 
X$ is dense in $L^1(\Omega;X)$, it is easy to see that the function 
$t\mapsto \langle d(t), F(t)\rangle$
is integrable and that 
\begin{equation}\label{3Pairing}
\langle d,F\rangle : = \int_\Omega\langle d(t), F(t)\rangle\, dt
\end{equation}
satisfies $\vert \langle d,F\rangle\vert\leq\norm{F}_{1}
\norm{d}_{\infty}$. 
Consequently,
(\ref{3Pairing}) defines a duality pairing between 
 $L^1(\Omega;X)$ and $L^\infty_\sigma(\Omega;X^*)$. 
It turns out that (\ref{3Pairing}) extends to an isometric isomorphism
\begin{equation}\label{3Sigma}
L^\infty_\sigma(\Omega;X^*) \simeq L^1(\Omega;X)^*.
\end{equation}
This result is known as the Dunford-Pettis theorem 
\cite[Theorem 2.1.6]{DP}.
See \cite[Theorem 11]{CLS} for a simple proof.

It is clear that
$$
L^\infty(\Omega;X^*)\subset L^\infty_\sigma(\Omega;X^*)
$$
According to \cite[Theorem IV.1.1]{DU}, this inclusion is an equality
if and only if $X^*$ has the so-called  Radon-Nikodym property. 
We will use the fact that Hilbert spaces have this property \cite[Corollary IV.14]{DU}. 
Thus, for any separable Hilbert space $\H$, we have
\begin{equation}\label{3RN}
L^\infty(\Omega;\H) = L^\infty_\sigma(\Omega;\H).
\end{equation}
Note that more generally, all reflexive Banach spaces have 
the Radon-Nikodym property. See \cite[Section VII.7]{DU} for more information.

Let $W\subset X^*$ be any $w^*$-closed subspace. Then $W$ 
has a natural predual, equal to the quotient space $X/W_\perp$, and the latter
is separable. This allows to define
$L^\infty_\sigma(\Omega;W)$. It is clear that 
$$
L^\infty_\sigma(\Omega;W) \subset L^\infty_\sigma(\Omega;X^*)
$$
isometrically. 
Furthermore, $L^1(\Omega;W_\perp)\subset L^1(\Omega;X)$ isometrically  and
under the identification
(\ref{3Sigma}), we have
\begin{equation}\label{3Inclusion2}
L^\infty_\sigma(\Omega;W) = L^1(\Omega;W_\perp)^\perp.
\end{equation}

Given any von Neumann algebras $M_1,M_2$, we let $M_1\overline{\otimes}M_2$
denote their normal tensor product. We will assume that the reader is familiar 
with the normal tensor product and its basic properties, for which
we refer either to \cite[Section IV.5]{Tak} (where it is called the $W^*$-tensor product)
or to \cite[Section 11.2]{KR}, see also \cite[Section 1.22]{Sak}.
We recall  a few facts that can be found in the latter references.

First assume that $M_1\subset B(H_1)$ and $M_2\subset B(H_2)$ as von Neumann
subalgebras, for some Hilbert spaces $H_1,H_2$. Then 
$$
M_1\overline{\otimes}M_2\subset B(H_1)\overline{\otimes}B(H_2)\simeq
B(H_1 \stackrel{2}{\otimes} H_2).
$$
More precisely, if we regard $M_1\otimes M_2\subset B(H_1 \stackrel{2}{\otimes} H_2)$
in the usual way, then $M_1\overline{\otimes}M_2$ is the $w^*$-closure
of $M_1\otimes M_2$ in $B(H_1 \stackrel{2}{\otimes} H_2)$.

Second, consider $\eta_1\in (M_1)_*$ and $\eta_2\in (M_2)_*$. 
Let $I_{M_1}$ (resp. $I_{M_2}$) denote the identity operator on
$M_1$ (resp. $M_2$).
Then the tensor map $\eta_1\otimes I_{M_2}\colon 
M_1 \otimes M_2\to M_2$ uniquely extends to a $w^*$-continuous
operator $\eta_1\overline{\otimes} I_{M_2}\colon 
M_1\overline{\otimes}M_2\to M_2$. Likewise, the tensor map $I_{M_1}\otimes \eta_2$ 
uniquely extends to a $w^*$-continuous
operator $I_{M_1}\overline{\otimes} \eta_2\colon 
M_1\overline{\otimes}M_2\to M_1$. Further 
$\eta_1\otimes \eta_2$ uniquely extends to a 
$w^*$-continuous functional on 
$M_1\overline{\otimes}M_2$. We still denote this extension by $\eta_1\otimes\eta_2$.
It is plain that 
$$
\eta_1\otimes \eta_2 =\eta_1\circ(I_{M_1}\overline{\otimes}\eta_2)
=\eta_2\circ(\eta_1\overline{\otimes}I_{M_2}).
$$

Third, the above definition of $\eta_1\otimes\eta_2\in (M_1\overline{\otimes}M_2)_*$
yields a natural embedding
\begin{equation}\label{3Predual}
(M_1)_*\otimes (M_2)_*\subset (M_1\overline{\otimes}M_2)_*.
\end{equation}

\begin{remark}\label{3Slice}
The operators $\eta_1\overline{\otimes} I_{M_2}$ and $I_{M_1}\overline{\otimes}\eta_2$
considered above are usually called the slice maps associated with $\eta_1$ and $\eta_2$,
respectively.

Assume  $M_1\subset B(H_1)$ and $M_2\subset B(H_2)$ as above and let 
$w\in  B(H_1)\overline{\otimes}B(H_2)$. Then 
$w\in M_1\overline{\otimes}M_2$ if and only if 
$$
\bigl(I_{B(H_1)}\overline{\otimes} \eta_2\bigr)(w)\in M_1
\qquad\hbox{and}\qquad
\bigl(\eta_1\overline{\otimes} I_{B(H_2)}\bigr)(w)\in M_2,
$$
for all $\eta_1\in B(H_1)_*$ and all $\eta_2\in B(H_2)_*$. This is the
so-called slice map property of von Neumann algebras, see \cite{Tom}.
In this paper, we will use this property only when $M_2=B(H_2)$, in which case the proof is simple 
(see e.g. \cite[Theorem 2.4]{EKR} and its proof).
\end{remark}

We conclude this section
with  a natural connection between $L^\infty_\sigma$-spaces 
and normal tensor products.

\begin{lemma}\label{3Sakai}
Let $M$ be a von Neumann algebra with a separable predual.
\begin{itemize}
\item [(1)] For any $c,d\in L^\infty_\sigma(\Omega;M)$, the functions
$t\mapsto c(t)d(t)$ and $t\mapsto d(t)^*$ are $w^*$-measurable. Furthermore,
$L^\infty_\sigma(\Omega;M)$ is a von Neumann algebra for the pointwise product and the pointwise involution.
\item [(2)] The natural embedding 
of $L^\infty(\Omega)\otimes M$ into $L^\infty_\sigma(\Omega;M)$
extends to a von Neumann algebra identification
\begin{equation}\label{3Sakai1}
L^\infty_\sigma(\Omega;M)\simeq L^\infty(\Omega)\overline{\otimes}M.
\end{equation}
\end{itemize}
\end{lemma}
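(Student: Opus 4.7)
The plan is to realize $L^\infty_\sigma(\Omega;M)$ as a concrete $\ast$-subalgebra of $B(L^2(\Omega;H))$ via pointwise multiplication, where $M\subset B(H)$ is a faithful representation on a separable Hilbert space (which exists by Remark \ref{2Dil2}(5)), and then to identify this image with $L^\infty(\Omega)\overline{\otimes}M$ using the commutation theorem for W$^*$-tensor products.

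For Part (1), the restrictions to $M$ of the vector functionals $\omega_{\xi,\eta}(x)=\langle x\xi,\eta\rangle$, $\xi,\eta\in H$, are norm-total in $M_*$, so $d\in L^\infty_\sigma(\Omega;M)$ if and only if $t\mapsto\langle d(t)\xi,\eta\rangle$ is measurable for $\xi,\eta$ in a countable dense subset of $H$. Measurability of the involution then follows from $\langle d(t)^*\xi,\eta\rangle=\overline{\langle d(t)\eta,\xi\rangle}$. For the product, expanding along an orthonormal basis $(e_n)_n$ of $H$,
\begin{equation*}
\langle c(t)d(t)\xi,\eta\rangle=\sum_n\langle d(t)\xi,e_n\rangle\langle c(t)e_n,\eta\rangle,
\end{equation*}
exhibits $\langle cd(\,\cdotp)\xi,\eta\rangle$ as a pointwise limit of measurable functions. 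Together with the pointwise $C^*$-identity $\|d(t)^*d(t)\|=\|d(t)\|^2$, this makes $L^\infty_\sigma(\Omega;M)$ a $C^*$-algebra under the pointwise operations; its von Neumann algebra structure will follow from Part (2).

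For Part (2), I define $\pi\colon L^\infty_\sigma(\Omega;M)\to B(L^2(\Omega;H))$ by $(\pi(d)F)(t)=d(t)F(t)$. Using Pettis's theorem and the separability of $H$, the function $t\mapsto d(t)F(t)$ is strongly measurable for every $F\in L^2(\Omega;H)$, and one gets $\|\pi(d)F\|_2\leq\|d\|_\infty\|F\|_2$. Routine checks show $\pi$ is an injective $\ast$-homomorphism, sending the elementary tensor $f\otimes m$ to $M_f\otimes m\in L^\infty(\Omega)\overline{\otimes}M$. To prove the containment $\pi(L^\infty_\sigma(\Omega;M))\subset L^\infty(\Omega)\overline{\otimes}M$, I would argue by commutation: $\pi(d)$ trivially commutes with $L^\infty(\Omega)\otimes 1$, and commutes with $1\otimes m'$ for each $m'\in M'$ since $d(t)\in M$. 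Hence $\pi(d)$ commutes with the $w^*$-closed algebra $L^\infty(\Omega)\overline{\otimes}M'$, and the commutation theorem for W$^*$-tensor products together with $L^\infty(\Omega)'=L^\infty(\Omega)$ yields $\pi(d)\in(L^\infty(\Omega)\overline{\otimes}M')'=L^\infty(\Omega)\overline{\otimes}M''=L^\infty(\Omega)\overline{\otimes}M$.

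The main obstacle is the surjectivity of $\pi$ onto $L^\infty(\Omega)\overline{\otimes}M$, which I would establish by a measurable-selection argument exploiting the separability of $M_*$. Given $w\in L^\infty(\Omega)\overline{\otimes}M$, the slices $g_y:=(I_{B(L^2(\Omega))}\overline{\otimes}y)(w)\in L^\infty(\Omega)$ (for $y\in M_*$) satisfy $\|g_y\|_\infty\leq\|w\|\|y\|$ and depend linearly on $y$. Choosing a countable $\Qdb[i]$-linear dense subspace $V\subset M_*$ and measurable representatives $\widetilde g_y$ for $y\in V$, a countable union of null sets produces a co-null $\Omega_0\subset\Omega$ on which $y\mapsto\widetilde g_y(t)$ is $\Qdb[i]$-linear with $|\widetilde g_y(t)|\leq\|w\|\|y\|$; extending by continuity defines $d(t)\in M$ with $\|d(t)\|\leq\|w\|$. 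Then $d\in L^\infty_\sigma(\Omega;M)$, and $\pi(d)=w$ because both sides produce identical slices, hence agree against all of $L^1(\Omega)\otimes M_*$, which is $w^*$-dense in the predual of $L^\infty(\Omega)\overline{\otimes}M$. Injectivity of a $\ast$-homomorphism between $C^*$-algebras makes $\pi$ isometric, and $L^\infty_\sigma(\Omega;M)$ inherits the von Neumann algebra structure through the identification (\ref{3Sakai1}), completing both assertions.
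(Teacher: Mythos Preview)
Your argument is correct. The paper does not actually give a proof of this lemma: it simply refers to \cite[Theorem 1.22.13]{Sak} and its proof. Your approach---representing $L^\infty_\sigma(\Omega;M)$ by pointwise multiplication on $L^2(\Omega;H)$, placing its image inside $L^\infty(\Omega)\overline{\otimes}M$ via the commutation theorem $(L^\infty(\Omega)\overline{\otimes}M')'=L^\infty(\Omega)\overline{\otimes}M$, and establishing surjectivity by a measurable-selection argument on the slices $(I\overline{\otimes}y)(w)$---is essentially a reconstruction of Sakai's argument. So you have supplied the details that the paper leaves to the cited reference, along the same lines.
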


\begin{proof}
This follows from \cite[Theorem 1.22.13]{Sak} and its proof.
\end{proof}

In the sequel, we write
$$
d\sim D
$$
if $d\in L^\infty_\sigma(\Omega;M)$ and $D\in L^\infty(\Omega)\overline{\otimes}M$
are corresponding to each other under the identification
(\ref{3Sakai1}).

\begin{remark}\label{3Sakai2}
It is plain that 
for any $d\in L^\infty_\sigma(\Omega;M)$ and 
$D\in L^\infty(\Omega)\overline{\otimes}M$,
we have $d\sim D$
if and only if
$$
\int_\Omega f(t) \langle d(t),\eta\rangle\, dt\,
=[f\otimes\eta](D)
$$
for all $f\in L^1(\Omega)$ and all $\eta\in M_*$.
\end{remark}

\begin{lemma}\label{3x}
Assume that $M$ has a separable predual, let $d\in L^\infty_\sigma(\Omega;M)$ and let 
$D\in L^\infty(\Omega)\overline{\otimes}M$ such that $d\sim D$. Then for all
$x\in M$, we have 
$$
xd\sim  (1\otimes x)D,
$$
where
$xd\in L^\infty_\sigma(\Omega;M)$ is defined by $(xd)(t)=xd(t)$, 
and $1=1_{L^\infty(\Omega)}$.
\end{lemma}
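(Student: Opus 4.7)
The plan is to verify the two requirements implicit in the statement: first that $xd$ actually lies in $L^\infty_\sigma(\Omega;M)$ so that the symbol $xd\sim(1\otimes x)D$ makes sense, and second that the characterization of Remark \ref{3Sakai2} holds for the pair $(xd,(1\otimes x)D)$. I will work throughout with the natural right action of $M$ on $M_*$ defined by $\eta\cdot x\in M_*$, $(\eta\cdot x)(y)=\eta(xy)$, so that $\langle xy,\eta\rangle=\langle y,\eta\cdot x\rangle$ for every $y\in M$.

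First I would check that $xd$ is $w^*$-measurable. For each $\eta\in M_*$,
\[
\langle xd(t),\eta\rangle=\langle d(t),\eta\cdot x\rangle,
\]
and the right-hand side is measurable because $d$ is $w^*$-measurable. Since $\norm{xd(t)}\leq\norm{x}\norm{d(t)}$, the function $xd$ is also essentially bounded, so $xd\in L^\infty_\sigma(\Omega;M)$.

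Next I would compute both sides of the identity in Remark \ref{3Sakai2}. Fix $f\in L^1(\Omega)$ and $\eta\in M_*$. Using the computation above,
\[
\int_\Omega f(t)\langle xd(t),\eta\rangle\, dt
=\int_\Omega f(t)\langle d(t),\eta\cdot x\rangle\, dt
=[f\otimes(\eta\cdot x)](D),
\]
the last equality being the defining relation $d\sim D$ applied to the functional $\eta\cdot x\in M_*$.

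The final step is to identify the right-hand side with $[f\otimes\eta]\bigl((1\otimes x)D\bigr)$. On the algebraic tensor product $L^\infty(\Omega)\otimes M$, a direct computation on elementary tensors gives
\[
[f\otimes\eta]\bigl((1\otimes x)(a\otimes b)\bigr)
=f(a)\,\eta(xb)
=[f\otimes(\eta\cdot x)](a\otimes b).
\]
Both functionals $[f\otimes\eta]\circ L_{1\otimes x}$ and $[f\otimes(\eta\cdot x)]$ are $w^*$-continuous on $L^\infty(\Omega)\overline{\otimes}M$ (left multiplication by a fixed element of a von Neumann algebra is $w^*$-continuous, as is every $w^*$-continuous functional), and they agree on the $w^*$-dense subspace $L^\infty(\Omega)\otimes M$. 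Hence they coincide on $L^\infty(\Omega)\overline{\otimes}M$, so $[f\otimes(\eta\cdot x)](D)=[f\otimes\eta]\bigl((1\otimes x)D\bigr)$. Combining this with the previous display yields the criterion of Remark \ref{3Sakai2}, and therefore $xd\sim(1\otimes x)D$. There is no real obstacle here; the only mild subtlety is invoking separate $w^*$-continuity of multiplication to pass from the algebraic tensor product to its normal closure.
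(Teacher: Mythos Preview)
Your proof is correct. The paper's proof is a one-liner: it simply invokes Lemma~\ref{3Sakai}, observing that the constant function $c(t)=x$ corresponds to $1\otimes x$ under the von Neumann algebra identification (\ref{3Sakai1}), so multiplicativity of that identification gives $xd=cd\sim(1\otimes x)D$ immediately. Your argument instead verifies the pairing criterion of Remark~\ref{3Sakai2} directly via the module action $\eta\mapsto\eta\cdot x$ on $M_*$; this is essentially unpacking, in this special case, the multiplicativity that the paper takes as already established in Lemma~\ref{3Sakai}. Both routes are short and rest on the same ingredients (separate $w^*$-continuity of multiplication and $w^*$-density of the algebraic tensor product), so there is no substantive difference beyond the level of detail.
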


\begin{proof}
This readily follows from Lemma \ref{3Sakai}, using the constant function $c(t)=x$.
\end{proof}

As a complement to Lemma \ref{3Sakai}, we note that
the natural embedding 
of $L^\infty(\Omega)\otimes L^\infty(\Omega)$ into 
$L^\infty(\Omega^2)$ extends to a von Neumann algebra identification
\begin{equation}\label{3Omega2}
L^\infty(\Omega^2)\simeq L^\infty(\Omega)\overline{\otimes} L^\infty(\Omega).
\end{equation}
Indeed, the embedding of $L^1(\Omega)\otimes L^1(\Omega)$
into $L^1(\Omega^2)$ extends to an isometric identification
$L^1(\Omega;L^1(\Omega))\simeq L^1(\Omega^2)$, by Fubini's theorem. 
Hence (\ref{3Omega2}) follows from \cite[1.22.10--1.22.12]{Sak}.

\begin{remark}\label{TP-trace}
We finally recall the following classical fact, for which we refer to
\cite[Proposition V.2.14]{Tak}. Let $(M,\tau_M)$
be any tracial von Neumann algebra. 
Let $I$ be an index set and 
let $(e_i)_{i\in I}$ denote the standard Hilbertian basis of $\ell^2_I$.
Then ${\rm tr}\otimes\tau_M$ uniquely
extends to a normal semi-finite faithful trace 
$$
{\rm tr}\overline{\otimes}\tau_{M}\colon (B(\ell^2_I)
\overline{\otimes} M)^+\longrightarrow [0,\infty],
$$
which can be described as follows. 
Any $z\in B(\ell^2_I)\overline{\otimes} M$ may be naturally 
regarded as an infinite matrix $[z_{ij}]_{(i,j)\in I^2}$ with entries
$z_{ij}\in M$. If $z$ is positive, then each $z_{ii}$ is positive and we have
$$
\bigl[{\rm tr}\overline{\otimes}\tau_{M}\bigr](z)=\sum_{i\in I}
\tau_{M}(z_{ii}).
$$
\end{remark}

\section{Bounded Schur multipliers on $B(L^2(\Omega))$}\label{S-Schur}
Let $(\Omega,\mu)$ be a $\sigma$-finite measure space. 
For any  $\varphi\in L^2(\Omega^2)$, we let $S_\varphi\colon L^2(\Omega)\to L^2(\Omega)$
be the bounded operator defined by 
$$
[S_\varphi(h)](s) = \int_\Omega \varphi(s,t)h(t)\, dt,
\qquad h\in L^2(\Omega).
$$
We recall that $S_\varphi$ is a Hilbert-Schmidt operator,
that is, $S_\varphi\in S^2(L^2(\Omega))$. Further the mapping 
$\varphi\mapsto S_\varphi$ yields a unitary identification
\begin{equation}\label{2HS}
L^2(\Omega^2)\simeq S^2(L^2(\Omega)),
\end{equation}
see e.g. \cite[Theorem VI. 23]{RS}.

Recall that we let ${\rm tr}$ denote the usual trace on $B(L^2(\Omega))$. 
It follows from the above unitary identification and a simple polarization argument that for any
$\varphi_1,\varphi_2\in L^2(\Omega^2)$, we have
\begin{equation}\label{2Trace0}
{\rm tr}\bigl(S_{\varphi_1}S_{\varphi_2}\bigr)
=\int_{\Omega^2} \varphi_1(s,t)\varphi_2(t,s)\, dtds.
\end{equation}

Given any $u,v\in L^2(\Omega)$, we regard $u\otimes v$
as an element of $L^2(\Omega^2)$ using the convention
$(u\otimes v)(s,t)=u(t)v(s)$,  for $(s,t)\in\Omega^2$. 
In the sequel, the operator $S_{u\otimes v}$ will be abusively denoted
by $u\otimes v$. In other words, we use the notation
$u\otimes v$ to also denote the element of $B(L^2(\Omega))$ defined by
\begin{equation}\label{uv}
[u\otimes v](h) = \Bigl(\int_\Omega u(t)h(t)\, dt\Bigr) v,
\qquad h\in L^2(\Omega).
\end{equation}

Let $\phi\in L^\infty(\Omega^2)$. Using (\ref{2HS}), we define
$T_\phi\colon S^2(L^2(\Omega))\to S^2(L^2(\Omega))$
by setting 
$$
T_\phi(S_\varphi)=S_{\phi\varphi},\qquad
\varphi\in L^2(\Omega^2).
$$
This operator is called a Schur multiplication operator.
Applying (\ref{2Trace0}), we obtain that for all 
$u,v,a,b\in L^2(\Omega)$, we have
\begin{equation}\label{2Trace}
{\rm tr}\Bigl(\bigl[T_\phi(u\otimes v)\bigr]\bigl[a\otimes b\bigr]\Bigr)
=\int_{\Omega^2} \phi(s,t)u(t)v(s)a(s)b(t)\, dtds.
\end{equation}

Let $S^\infty(L^2(\Omega))$ denote the Banach space of all compact operators
on $L^2(\Omega)$ and recall that $S^2(L^2(\Omega))$ is a dense
subspace of $S^\infty(L^2(\Omega))$.
We say that $\phi$ is a bounded Schur multiplier if $T_\phi$ extends to a bounded
operator from $S^\infty(L^2(\Omega))$ into itself.  In this case, 
$T_\phi$ admits a unique $w^*$-continuous extension from $B(L^2(\Omega))$ into
itself. Indeed, $B((L^2(\Omega))$ is the second dual of $S^\infty((L^2(\Omega))$
(see \cite[Chapter 19]{Conway1}).
In the sequel, we  keep the notation
$$
T_\phi\colon B(L^2(\Omega))\longrightarrow B(L^2(\Omega))
$$
to denote this extension.
Schur multipliers as defined in this section
go back at least 
to Haagerup \cite{Haa} (see also \cite{Pe1, Spronk}).
We refer to \cite{Du} and the references therein for details and complementary results.

The following description of bounded Schur multipliers has a rather long history.
In the discrete case, part (1) of Theorem \ref{2Description} 
was stated by Paulsen in \cite[Corollary 8.8]{Pau} and
by Pisier in \cite[Theorem 5.1]{Pis}, who refers himself 
to some earlier work of Grothendieck. 
For the general case of this statement, we refer to Haagerup
\cite{Haa}, Peller \cite{Pe1} and Spronk \cite[Section 3.2]{Spronk}.
Part (2) is a folklore result which follows, for example,
from \cite[Exercice 8.7]{Pau} and the proof of \cite[Theorem 1.7]{DL}.

In the next statement, 
$(\,\cdotp\,\big\vert\,\,\cdotp)_{\footnotesize{\H}}$ stands for the inner
product on some Hilbert space $\H$.

\begin{theorem}\label{2Description}
Let $\phi\in L^\infty(\Omega^2)$ and let $C\geq 0$.
\begin{itemize}
\item [(1)] The function $\phi$ is a bounded Schur multiplier and 
$\norm{T_\phi\colon B(L^2(\Omega)\to B(L^2(\Omega))}\leq C$ if and only if there
exist a Hilbert space $\H$ and two functions 
$\alpha,\beta\in L^\infty(\Omega;\H)$ such that 
$\norm{\alpha}_\infty\norm{\beta}_\infty\leq C$ and 
$$
\phi(s,t) =\bigl(\alpha(t)\,\big\vert\,\beta(s)\bigr)_{\footnotesize{\H}}
\quad \hbox{a.e.-}(s,t)\in\Omega^2.
$$
\item [(2)] The function $\phi$ defines a bounded, positive Schur multiplier 
$T_\phi$ with
$\norm{T_\phi}\leq C$ if and only if there
exist a Hilbert space $\H$ and 
$\alpha\in L^\infty(\Omega;\H)$ such that $\norm{\alpha}_\infty^2\leq C$ and 
\begin{equation}\label{2Description2}
\phi(s,t) =\bigl(\alpha(t)\,\big\vert\,\alpha(s)\bigr)_{\footnotesize{\H}}
\quad \hbox{a.e.-}(s,t)\in\Omega^2.
\end{equation}
\end{itemize}
\end{theorem}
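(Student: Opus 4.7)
My plan is to treat the two parts in parallel, since each consists of an explicit factorization (the ``if'' direction) and a representation theorem (the ``only if'' direction). For part (1), the ``only if'' direction is the Grothendieck-Haagerup-Pisier characterization of bounded Schur multipliers, which I would invoke from \cite{Haa, Spronk}; the serious work lies in the ``only if'' direction of (2).

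\smallskip
\emph{The ``if'' directions.} Given $\alpha,\beta\in L^\infty(\Omega;\H)$, I would introduce the pointwise multiplication operators
$$
M_\alpha,M_\beta\colon L^2(\Omega)\longrightarrow L^2(\Omega;\H),\qquad M_\alpha(h)(t)=h(t)\alpha(t),
$$
of norms at most $\|\alpha\|_\infty$ and $\|\beta\|_\infty$. Via (\ref{Hilbert-TP}), any $X\in B(L^2(\Omega))$ gives rise to $X\otimes I_\H\in B(L^2(\Omega;\H))$, and the $w^*$-continuous map $\widetilde{T}(X):=M_\beta^*(X\otimes I_\H)M_\alpha$ satisfies $\|\widetilde{T}\|\leq\|\alpha\|_\infty\|\beta\|_\infty$. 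Computing on rank-one operators $u\otimes v$ gives
$[\widetilde{T}(u\otimes v)](h)(s)=v(s)\int_\Omega u(t)h(t)(\alpha(t)|\beta(s))_\H\,dt$,
which matches the integral kernel $\phi(s,t)u(t)v(s)$ of $T_\phi(u\otimes v)$ precisely when $\phi(s,t)=(\alpha(t)|\beta(s))_\H$. Since rank-one operators span a $w^*$-dense subspace of $B(L^2(\Omega))$, this yields $T_\phi=\widetilde{T}$ and settles the ``if'' direction of (1). For (2), specializing $\beta=\alpha$ produces the factorization $T_\phi(X)=M_\alpha^*(X\otimes I_\H)M_\alpha$, which is manifestly positive.

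\smallskip
\emph{The ``only if'' direction of (2).} Assume $T_\phi$ is a bounded positive Schur multiplier with $\|T_\phi\|\leq C$. Since Schur multipliers on $B(L^2(\Omega))$ are $L^\infty(\Omega)$-bimodule maps and satisfy $\|T_\phi\|_{cb}=\|T_\phi\|$, positivity upgrades to complete positivity. By the Stinespring-Paulsen representation, there exist a Hilbert space $\K$, a normal $*$-representation $\pi$ of $B(L^2(\Omega))$ on $\K$, and $V\colon L^2(\Omega)\to\K$ with $\|V\|^2\leq\|T_\phi\|$, such that $T_\phi(X)=V^*\pi(X)V$. Normal representations of $B(L^2(\Omega))$ are, up to unitary equivalence, of the form $X\mapsto X\otimes I_\H$ on $L^2(\Omega)\stackrel{2}{\otimes}\H$ for some Hilbert space $\H$. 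The bimodule property of $T_\phi$ over $L^\infty(\Omega)$ then forces $V$ to commute with the diagonal multiplications $M_f\mapsto M_f\otimes I_\H$, whence $V=M_\alpha$ for some $\alpha\in L^\infty(\Omega;\H)$ with $\|\alpha\|_\infty^2\leq C$. Reading off the resulting formula $T_\phi(u\otimes v)=M_\alpha^*((u\otimes v)\otimes I_\H)M_\alpha$ on rank-one operators then identifies $\phi(s,t)=(\alpha(t)|\alpha(s))_\H$ almost everywhere.

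\smallskip
I expect the main obstacle to be the passage from the abstract Stinespring factorization to the specific multiplication-operator form $V=M_\alpha$: this requires carefully exploiting the $L^\infty(\Omega)$-bimodule structure of $T_\phi$ together with the classification of commutants of diagonal masas under normal representations, which interact somewhat delicately when $\Omega$ is not discrete. An alternative route would be to deduce (2) directly from (1) by symmetrizing the representation $\phi(s,t)=(\alpha_0(t)|\beta_0(s))_{\H_0}$ using the Hermitian positive-definite character of $\phi$, via a Kolmogorov-type reproducing kernel construction carried out inside the separable Hilbert space $\H_0$.
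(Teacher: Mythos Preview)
The paper does not actually prove this theorem: it is stated as background, with part~(1) attributed to Haagerup \cite{Haa} and Spronk \cite{Spronk}, and part~(2) described as a folklore consequence of \cite[Exercise~8.7]{Pau} together with the proof of \cite[Theorem~1.7]{DL}. There is thus no in-paper argument to compare against, only the pointers to the literature.

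Your sketch is essentially correct and is the standard route to these results. The ``if'' directions via the factorization $T_\phi(X)=M_\beta^*(X\otimes I_{\H})M_\alpha$ are clean. For the ``only if'' of~(2), the Stinespring-plus-bimodule strategy is exactly the natural one and is in the spirit of the references cited by the paper. Two points deserve an extra line of justification. First, the passage from positivity to complete positivity: this holds because the amplification $(T_\phi)_n$ on $M_n(B(L^2(\Omega)))\simeq B(L^2(\Omega)\otimes\ell^2_n)$ is again the Schur multiplier associated with the same kernel $\phi$ in the $\Omega$-variables, hence positive. Second, the step you yourself flag, namely deducing $V=M_\alpha$ from the bimodule identity: here one needs the \emph{minimal} Stinespring dilation so that $T_\phi(fXg)=fT_\phi(X)g$ for $f,g\in L^\infty(\Omega)$ lifts to the intertwining relation $Vf=(f\otimes I_{\H})V$; then $V$ lies in the space of bounded $L^\infty(\Omega)$-module maps $L^2(\Omega)\to L^2(\Omega;\H)$, which one identifies with $L^\infty(\Omega;\H)$ (e.g.\ by expanding along an orthonormal basis of $\H$ and using that $L^\infty(\Omega)$ is maximal abelian in $B(L^2(\Omega))$). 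With these two points made explicit, your argument is complete.
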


\medskip
Recall that $(\Omega,\mu)$ is called separable if $L^2(\Omega)$ is separable. For any $1\leq p<\infty$, this
is equivalent to the separability of $L^p(\Omega)$.

\begin{remark}
Assume that $(\Omega,\mu)$ is separable, let $\phi\in L^\infty(\Omega^2)$ and assume
that $\phi$ satisfies the assertion (ii) of
Theorem \ref{1Main}. Since $(N,\tau_N)$ is normalized, we have a contractive embedding 
$\kappa\colon N\to L^2(N,\tau_N)$. Let $\alpha :=\kappa\circ d \in L^\infty_\sigma(\Omega;L^2(N,\tau_N))$.
Then $\norm{\alpha}_\infty\leq 1$. 
According to (\ref{3RN}), $\alpha$ actually belongs to 
$L^\infty(\Omega;L^2(N,\tau_N))$. Moreover for almost every $(s,t)\in\Omega^2$, we have
$$
\tau_N\bigl(d(s)^*d(t)\bigr)= \bigl(\alpha(t)\,\big\vert\,\alpha(s)\bigr)_{L^2(N,\tau_N)}.
$$
Hence $\phi$ satisfies (\ref{2Description2}), by (\ref{1TraceForm}). Thus, 
(\ref{1TraceForm}) should be regarded as a strengthening
of the factorization (\ref{2Description2}). In this respect, Theorem \ref{1Main} says that this strengthened factorization property
characterizes Schur multipliers with a separable absolute dilation.
\end{remark}

\section{Preparatory results}\label{Prep}

Throughout we fix a $\sigma$-finite measure space $(\Omega,\mu)$.
For any $\theta\in L^\infty(\Omega)$, let $R_\theta\in B(L^2(\Omega))$ be the
pointwise multiplication operator defined by 
$R_\theta(h)=\theta h$ for all $h\in L^2(\Omega)$. Since $\theta\mapsto R_\theta$ is
a $w^*$-continuous 1-1 $*$-homomorphism, we may identify 
$L^\infty(\Omega)$ with $\{R_\theta\, :\,\theta \in L^\infty(\Omega)\}$.
Using this identification, we will henceforth regard $L^\infty(\Omega)$
as a von Neumann subalgebra of $B(L^2(\Omega))$, and simply write
\begin{equation}\label{4Mult}
L^\infty(\Omega)\subset B(L^2(\Omega)).
\end{equation}
We recall that $L^\infty(\Omega)$ is equal to its own commutant. That is, 
an operator $T\in B(L^2(\Omega))$ is of the form $R_{\theta_0}$ for some $\theta_0\in L^\infty(\Omega)$
if and only if $TR_\theta=R_\theta T$ for all $\theta\in L^\infty(\Omega)$,
that is, $T(\theta h)=\theta  T(h)$ for all $\theta\in L^\infty(\Omega)$ and all $h\in L^2(\Omega)$
(see \cite[Chapter IX, Theorem 6.6]{Conway2}).

For any measurable set $E\subset \Omega$, we let 
$\chi_E$ denote the indicator function of $E$. Note that 
$\chi_E$ belongs to $L^2(\Omega)$ if and only if
$E$ has finite measure.

\begin{lemma}\label{4MultOp}
Let $T\in B(L^2(\Omega))$. Then $T$ belongs to $L^\infty(\Omega)$ (in the
above sense) if and only if 
$T(\chi_E)=\chi_E T(\chi_E)$ for all measurable sets $E\subset \Omega$
with finite measure.
\end{lemma}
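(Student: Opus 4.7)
The "only if" direction is immediate: if $T=R_{\theta_0}$ for some $\theta_0\in L^\infty(\Omega)$, then for any finite-measure set $E$, $T(\chi_E)=\theta_0\chi_E$, and multiplying by $\chi_E$ leaves this function unchanged since $\chi_E^2=\chi_E$. The point is the reverse implication.

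For the converse, the plan is to observe that the hypothesis $T(\chi_E)=\chi_E T(\chi_E)$ exactly says that $T(\chi_E)$ is supported in $E$, and to use this to build a single scalar-valued function $\theta_0$ on $\Omega$. First I would show a compatibility: if $F\subset E$ both have finite measure, then writing $\chi_E=\chi_F+\chi_{E\setminus F}$, applying $T$, and using that $T(\chi_{E\setminus F})$ is supported in $E\setminus F$, one gets $T(\chi_F)=\chi_F T(\chi_E)$. In other words, $T(\chi_E)|_F=T(\chi_F)$. Next, using $\sigma$-finiteness, fix an increasing sequence $(\Omega_n)_{n\ge1}$ of finite-measure sets with $\bigcup_n\Omega_n=\Omega$, and set $\theta_n:=T(\chi_{\Omega_n})\in L^2(\Omega)$, which is supported in $\Omega_n$. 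The compatibility just proved yields $\theta_{n+1}|_{\Omega_n}=\theta_n$, so one gets a well-defined measurable function $\theta_0\colon\Omega\to\Cdb$ satisfying $\theta_0|_{\Omega_n}=\theta_n$ for every $n$; equivalently, $T(\chi_E)=\theta_0\chi_E$ for every finite-measure $E\subset\Omega_n$, and hence for every finite-measure $E$ (again by the compatibility, applied to $E\cap\Omega_n\nearrow E$).

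The main (mild) obstacle is showing that $\theta_0$ lies in $L^\infty(\Omega)$ with $\norm{\theta_0}_\infty\le\norm{T}$. The argument I would run is: if this failed, the set $A=\{s\in\Omega:|\theta_0(s)|>\norm{T}\}$ would have positive measure; by $\sigma$-finiteness one can extract a subset $A'\subset A$ with $0<\mu(A')<\infty$, and then
\[
\norm{T}^2\mu(A')\ge\norm{T(\chi_{A'})}_2^2=\int_{A'}|\theta_0|^2\,d\mu>\norm{T}^2\mu(A'),
\]
a contradiction. Hence $\theta_0\in L^\infty(\Omega)$.

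Finally, to conclude $T=R_{\theta_0}$, observe that the equality $T(h)=\theta_0 h$ holds by construction for $h=\chi_E$ with $\mu(E)<\infty$, hence by linearity for every simple function with finite-measure support. Since such simple functions are dense in $L^2(\Omega)$, and since both $T$ and the multiplication operator $R_{\theta_0}$ are bounded on $L^2(\Omega)$ (the latter because $\theta_0\in L^\infty(\Omega)$), the identity $T(h)=\theta_0 h$ extends to all $h\in L^2(\Omega)$, i.e.\ $T=R_{\theta_0}$, which by (\ref{4Mult}) means $T$ belongs to $L^\infty(\Omega)$.
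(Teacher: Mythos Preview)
Your proof is correct, but it follows a genuinely different route from the paper's.

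The paper never constructs the multiplier explicitly. Instead, it proves the module identity $T(\chi_E\chi_F)=\chi_E T(\chi_F)$ for all finite-measure $E,F$, extends by linearity and density to $T(\theta h)=\theta T(h)$ for all $\theta\in L^\infty(\Omega)$ and $h\in L^2(\Omega)$, and then invokes the maximal-abelian property $L^\infty(\Omega)'=L^\infty(\Omega)$ stated just before the lemma. So the paper's argument is: show $T$ commutes with all multiplication operators, then quote the commutant fact.

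Your argument is constructive: you use $\sigma$-finiteness to exhaust $\Omega$ by finite-measure sets, patch the functions $T(\chi_{\Omega_n})$ into a single $\theta_0$ via the compatibility relation, prove $\theta_0\in L^\infty(\Omega)$ by a direct norm estimate, and conclude by density of simple functions. This buys you an explicit description of the symbol together with the bound $\norm{\theta_0}_\infty\le\norm{T}$, and it avoids appealing to the commutant characterization. The paper's route is a bit shorter and more structural, since the commutant fact is already on the table; yours is more self-contained. One small point: the step ``hence for every finite-measure $E$ (again by the compatibility, applied to $E\cap\Omega_n\nearrow E$)'' is correct but deserves one more line --- you are combining $T(\chi_{E\cap\Omega_n})=\chi_{E\cap\Omega_n}T(\chi_E)$ with $T(\chi_{E\cap\Omega_n})=\theta_0\chi_{E\cap\Omega_n}$ and letting $n\to\infty$ pointwise.
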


\begin{proof}
The `only if' part is obvious. To prove the `if part', let
$\A$ be the collection of all measurable sets $E\subset \Omega$
with finite measure.
We assume that $T(\chi_E)=\chi_E T(\chi_E)$
for all $E\in\A$. This readily implies that $T(\chi_E)=\chi_{E'} T(\chi_E)$
for all $E,E'\in\A$ with $E\subset E'$.

Let $E,F\in\A$. Since $\chi_E\chi_F=\chi_{E\cap F}$ and $E\cap F\subset E$,
it follows from above that we have 
$\chi_E T(\chi_E\chi_F)=T(\chi_E\chi_F)$.
Let $E^c$ be the complement of $E$. Since $\chi_{E^c}\chi_F=
\chi_{E^c\cap F}$, we have $T(\chi_{E^c}\chi_F)=\chi_{E^c\cap F} T(\chi_{E^c}\chi_F)$,
hence $\chi_E T(\chi_{E^c}\chi_F)=0$. Writing a decomposition $T(\chi_F)=T(\chi_E\chi_F)+
T(\chi_{E^c}\chi_F)$, we deduce that 
\begin{equation}\label{4Mod}
T(\chi_E\chi_F)=\chi_E T(\chi_F).
\end{equation}

Let
$$
\E={\rm Span}\bigl\{\chi_E\, :\, E\in\A\bigr\}.
$$
By linearity, the identity (\ref{4Mod})
implies that $T(\theta  h)=\theta T(h)$ for all $\theta, h\in\E$.
Since $\E$ is dense in $L^2(\Omega)$, we deduce that $T(\theta h)=\theta T(h)$ 
for all $h\in L^2(\Omega)$ and all $\theta\in\E$. Further $\E$ is $w^*$-dense in 
$L^\infty(\Omega)$. If $(\theta_\iota)_{\iota}$ is a net of $\E$ converging to
some $\theta\in L^\infty(\Omega)$ in the $w^*$-topology, then for all 
$h\in L^2(\Omega)$, 
$\theta_\iota h\to \theta h$ weakly in $L^2(\Omega)$. We deduce that
$T(\theta_\iota h)\to T(\theta h)$ and $\theta_\iota T(h)\to \theta T(h)$  
weakly in $L^2(\Omega)$. Consequently, we have
$T(\theta  h)=\theta T(h) $
for all $h\in L^2(\Omega)$ and all $\theta\in L^\infty(\Omega)$.
According to the discussion after (\ref{4Mult}), $T$ is therefore 
a multiplication operator, that is, an element of $L^\infty(\Omega)$.
\end{proof}

In the sequel we write 
$L^\infty_s(\Omega)\overline{\otimes}L^\infty_t(\Omega)$ instead 
of $L^\infty(\Omega)\overline{\otimes}L^\infty(\Omega)$ to indicate
that the variable in the first factor is denoted by $s\in\Omega$
and the  variable in the second factor is denoted by $t\in\Omega$.
In accordance with this convention, we re-write (\ref{3Omega2}) as
\begin{equation}\label{3Omega3}
L^\infty(\Omega^2)\simeq L^\infty_s(\Omega)\overline{\otimes} L^\infty_t(\Omega).
\end{equation}
Let $M$ be a von Neumann algebra and let
$$
W=L^\infty(\Omega^2)\overline{\otimes}M =
L^\infty_s(\Omega)\overline{\otimes}L^\infty_t(\Omega)\overline{\otimes} M.
$$
We let $1_t$ (resp. $1_s$) denote the constant function $1$ in 
$L^\infty_t(\Omega)$ (resp. $L^\infty_s(\Omega)$).
Consider $D$
and $C$ in $L^\infty(\Omega)\overline{\otimes} M$. We let
\begin{equation}\label{Ref1}
1_s\otimes D_t\in W
\end{equation}
be obtained from $D$
by identifying $L^\infty(\Omega)\overline{\otimes} M$
with $1_s\,\overline{\otimes} L^\infty_t(\Omega)\overline{\otimes} M\subset W$
in the natural way. Likewise, we let
\begin{equation}\label{Ref2}
1_t\otimes C_s\in W
\end{equation}
be obtained from $C$
by identifying $L^\infty(\Omega)\overline{\otimes} M$
with $L^\infty_s(\Omega)\overline{\otimes} \,1_t\, \overline{\otimes}M \subset W$.

\begin{lemma}\label{3cd2} 
Assume that $M$ has a separable predual. Let $C,D$ as above and
let $c,d\in L^\infty_\sigma(\Omega;M)$ such that $c\sim C$ and $d\sim D$.
Then the function $c\odot d\colon\Omega^2\to M$ defined by 
\begin{equation}\label{7cd}
(c\odot d)(s,t)=c(s)d(t),\qquad (s,t)\in\Omega^2,
\end{equation}
is $w^*$-measurable, hence belongs to 
$L^\infty_\sigma(\Omega^2;M)$. Moreover we have
\begin{equation}\label{6cd}
c\odot d\sim (1_t\otimes C_s)(1_s\otimes D_t).
\end{equation}
\end{lemma}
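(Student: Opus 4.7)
The plan is to reduce the statement to Lemma \ref{3Sakai} applied over the product space $(\Omega^2,\mu\otimes\mu)$, by expressing $c\odot d$ as the pointwise product of two auxiliary functions whose Sakai-identifications in $L^\infty(\Omega^2)\overline{\otimes}M$ are straightforward to compute.

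First I would introduce $\tilde c,\tilde d\colon\Omega^2\to M$ defined by $\tilde c(s,t)=c(s)$ and $\tilde d(s,t)=d(t)$. For each $\eta\in M_*$, the scalar function $(s,t)\mapsto\langle\tilde c(s,t),\eta\rangle=\langle c(s),\eta\rangle$ is the pullback of a measurable function on $\Omega$ via the first-coordinate projection, hence measurable on $\Omega^2$; so $\tilde c$ is $w^*$-measurable, and $\tilde d$ is treated symmetrically. Since both are clearly essentially bounded, $\tilde c,\tilde d\in L^\infty_\sigma(\Omega^2;M)$. As $c\odot d=\tilde c\cdot\tilde d$ pointwise, Lemma \ref{3Sakai}(1) applied to $(\Omega^2,\mu\otimes\mu)$ yields that $c\odot d$ is $w^*$-measurable and belongs to $L^\infty_\sigma(\Omega^2;M)$, proving the first assertion.

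The core step is then to establish the individual identifications
\[\tilde c\,\sim\,1_t\otimes C_s\qquad\text{and}\qquad \tilde d\,\sim\,1_s\otimes D_t\]
under Lemma \ref{3Sakai}(2) applied to $(\Omega^2,\mu\otimes\mu)$. By Remark \ref{3Sakai2} it suffices to verify, for every $g\in L^1(\Omega^2)$ and $\eta\in M_*$, the pairing identity $\int_{\Omega^2}g(s,t)\langle c(s),\eta\rangle\,ds\,dt=[g\otimes\eta](1_t\otimes C_s)$. By bilinearity and the density of $L^1(\Omega)\otimes L^1(\Omega)$ in $L^1(\Omega^2)$, one reduces to $g=g_1\otimes g_2$. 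Fubini then rewrites the left-hand side as $\bigl(\int_\Omega g_2\bigr)\int_\Omega g_1(s)\langle c(s),\eta\rangle\,ds=\bigl(\int_\Omega g_2\bigr)[g_1\otimes\eta](C)$ via $c\sim C$, and unwinding the slice structure of $1_t\otimes C_s\in L^\infty_s(\Omega)\,\overline{\otimes}\,1_t\,\overline{\otimes}\,M$ yields exactly the same value for the right-hand side. The statement $\tilde d\sim 1_s\otimes D_t$ is symmetric.

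Finally, Lemma \ref{3Sakai}(2) asserts that (\ref{3Sakai1}) is a von Neumann algebra isomorphism, in particular multiplicative; therefore the pointwise product $\tilde c\cdot\tilde d=c\odot d$ on the left corresponds to the operator product $(1_t\otimes C_s)(1_s\otimes D_t)$ on the right, which is precisely (\ref{6cd}). The main obstacle is the pairing verification for $\tilde c\sim 1_t\otimes C_s$: it forces one to reconcile the Sakai identification over $\Omega$ with the one over $\Omega^2$ and to track how $L^\infty(\Omega)\overline{\otimes}M$ embeds in the iterated tensor $L^\infty_s(\Omega)\overline{\otimes}L^\infty_t(\Omega)\overline{\otimes}M$ as a $1_t$-trivial slice; once this compatibility is in place, the remainder is purely formal.
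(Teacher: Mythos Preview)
Your argument is correct and takes a genuinely different route from the paper's. The paper proves the $w^*$-measurability of $c\odot d$ by hand: it shows that $t\mapsto d(t)\eta$ is Bochner-measurable into $M_*$ (via Pettis and separability), approximates it by simple functions $w_n\in L^\infty(\Omega)\otimes M_*$, and then passes to the limit in $(s,t)\mapsto\langle c(s),w_n(t)\rangle$. For the identification \eqref{6cd} the paper computes the slice $[f\overline{\otimes}I](\Delta)$ explicitly, first on algebraic tensors and then by separate $w^*$-continuity, reducing to the auxiliary Lemma~\ref{3x} ($xd\sim(1\otimes x)D$) with $x=\int_\Omega f(s)c(s)\,ds$.

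Your approach sidesteps both computations by observing that $c\odot d$ is the pointwise product in $L^\infty_\sigma(\Omega^2;M)$ of the pullbacks $\tilde c,\tilde d$, and that the Sakai isomorphism of Lemma~\ref{3Sakai} over $\Omega^2$ is multiplicative. This is more conceptual and shorter; the only real work is the compatibility check $\tilde c\sim 1_t\otimes C_s$, which you carry out correctly on elementary tensors $g_1\otimes g_2\otimes\eta$ (density in $L^1(\Omega^2)$ and continuity of both pairings in $g$ justify the reduction). The paper's route, by contrast, is more self-contained in that it never needs to invoke the Sakai identification over the product space nor the compatibility of \eqref{3Omega2} with \eqref{3Sakai1}; it also makes the role of Lemma~\ref{3x} explicit, which is otherwise hidden inside the multiplicativity statement.
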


\begin{proof} 
Let $\eta\in M_*$.
For any $\delta\in M$, let $\eta\delta\in M_*$ be defined by
$\langle a,\eta\delta\rangle= \langle \delta a, \eta\rangle$ for all $a\in M$.
This is well-defined since the product on $M$ is separately 
$w^*$-continuous. Similarly, $\delta\eta\in M_*$ is defined by
$\langle a,\delta\eta\rangle= \langle  a\delta, \eta\rangle$.

Writing
$\langle\delta,  d(t)\eta\rangle = \langle d(t),\eta\delta\rangle$ for any 
$\delta\in M$, we see that the function
$t\mapsto d(t)\eta$ is weakly measurable from $\Omega$ to $M_*$. 
Hence it is measurable, by the
separability assumption and \cite[Theorem II. 1.2]{DU}. 
The function
$t\mapsto d(t)\eta$ is therefore an 
almost everywhere limit of a sequence $(w_n)_{n\geq 1}$ of $L^\infty_t(\Omega)\otimes M_*$.
Since $c$ is $w^*$-measurable, the function $(s,t)\mapsto \langle c(s),w_n(t)\rangle$
is measurable for any $n\geq 1$.
We deduce that  $(s,t)\mapsto \langle c(s),d(t)\eta\rangle =  \langle c(s)d(t),\eta\rangle$
is measurable. This shows that $c\odot d$ is $w^*$-measurable. It is clear that
$c\odot d$ is essentially bounded. Thus, we
obtain that $c\odot d\in L^\infty_\sigma(\Omega^2;M)$.

Set  $\Delta =(1_t\otimes C_s)(1_s\otimes D_t)$ in this proof.  
We introduce
$$
m=\int_\Omega f(s) c(s)\,ds\, \in M,
$$
this integral being defined in  the
$w^*$-topology.
Consider the slice map  
$$
\widetilde{f} : =f\overline{\otimes} I_{L^\infty_t(\Omega)\overline{\otimes} M}
\colon W\longrightarrow L^\infty(\Omega)\overline{\otimes} M.
$$
For any $u,v\in L^\infty(\Omega)$ and any $a,b\in M$, we have
$(u\otimes 1\otimes a)(1\otimes v\otimes b)
=(u\otimes v\otimes ab)$ in $W$, hence
$$
\widetilde{f}\bigl((u\otimes 1\otimes a)(1\otimes v\otimes b)\bigr)
= \Bigl(\int_\Omega  f u \Bigr) v\otimes ab
=\Bigl(1\otimes \Bigl(\int_\Omega f(s)u(s)a\, ds\Bigr)\Bigr)
(v\otimes b).
$$
By linearity, this implies that if $C$ and $D$
belong to the algebraic tensor product
$L^\infty(\Omega)\otimes M$, we have
\begin{equation}\label{3Form}
\widetilde{f}(\Delta)
=(1\otimes m)D.
\end{equation}
If $(C_\iota)_\iota$ is a net of $ L^\infty(\Omega)\overline{\otimes} M$ converging to $C$ in
the $w^*$-topology of 
$L^\infty(\Omega)\overline{\otimes} M$ and if
$c_\iota\in L^\infty_\sigma(\Omega;M)$ are such that $c_\iota\sim C_\iota$,
then $\int_\Omega f(s) c_\iota(s)\,ds\,$ converges to $m$ in the $w^*$-topology of
$M$.
Since the product on von Neumann algebras is separately 
$w^*$-continuous and $\widetilde{f}$ is $w^*$-continuous, we deduce that the identity (\ref{3Form}) actually holds 
in the general case. 
Consequently,
$$
[f\otimes g\otimes\eta](\Delta) =
[g\otimes\eta]\bigl((1\otimes m)D\bigr).
$$

Since $d\sim D$, Lemma \ref{3x} and the `only if' part of Remark \ref{3Sakai2} ensure that 
$$
[g\otimes\eta]\bigl((1\otimes m)D\bigr)=\int_\Omega g(t) \langle md(t),\eta\rangle\,
dt.
$$
Writing 
$$
\langle md(t),\eta\rangle=\langle m,d(t)\eta\rangle
=\int_\Omega f(s)\langle c(s),d(t)\eta\rangle\,ds
=\int_\Omega f(s)\langle c(s)d(t), \eta\rangle\,ds,
$$
and using the measurability of $\langle c\odot d,\eta\rangle$, we deduce that 
$$
[f\otimes g\otimes\eta](\Delta) 
=\int_{\Omega^2} g(t)f(s)
\bigl\langle c(s)d(t),\eta\bigr\rangle\, dsdt.
$$
Applying the `if' part of Remark \ref{3Sakai2}, we deduce (\ref{6cd}).
\end{proof}

Let $H_1,H_2$ be two Hilbert spaces and let $H_1\stackrel{2}{\oplus} H_2$ denote their
Hilbertian direct sum. Let $B(H_1,H_2)$ denote the Banach space
of all bounded operators from $H_1$ into $H_2$. Any $L\in  B(H_1\stackrel{2}{\oplus} H_2)$
can be naturally written as a $2\times 2$ matrix
$$
L=\begin{pmatrix} L_{11} & L_{12}\\ L_{21} & L_{22}\end{pmatrix}
$$
with $L_{ij}\in B(H_j,H_i)$ for all $(i,j)\in\{1,2\}^2$. Thus, 
we may regard 
\begin{equation}\label{4embedding}
B(H_1,H_2)\subset B(H_1\stackrel{2}{\oplus} H_2)
\end{equation} 
as a closed subspace.
Note that $B(H_1,H_2)$ is actually a $w^*$-closed subspace of $B(H_1\stackrel{2}{\oplus} H_2)$,
hence a dual space. If $H_1,H_2$ are separable, then the predual of 
$B(H_1,H_2)$ is separable as well.

For any von Neumann algebra $M_0$, we let $M_0\overline{\otimes} B(H_1,H_2)$
denote the $w^*$-closure of the algebraic tensor product $M_0\otimes B(H_1,H_2)$ into 
$M_0\overline{\otimes} B(H_1\stackrel{2}{\oplus} H_2)$. For any 
$$
D\in M_0\overline{\otimes} B(H_1,H_2)\qquad\hbox{and}\qquad
C\in M_0\overline{\otimes} B(H_2,H_1),
$$
the product of $D$ and $C$ in 
the von Neumann algebra $M_0\overline{\otimes} B(H_1\stackrel{2}{\oplus} H_2)$
yields 
$$
CD\in M_0\overline{\otimes} B(H_1).
$$

Let $K$
be another Hilbert space and let $H=H_1\stackrel{2}{\oplus} H_2$. We 
note that the identification
$B(K\stackrel{2}{\otimes} H) \simeq  B(K)\overline{\otimes} B(H)$ reduces to
a $w^*$-homeomorphic and isometric identification
\begin{equation}\label{4Tensor}
B(K\stackrel{2}{\otimes} H_1, K\stackrel{2}{\otimes}  H_2) \simeq  B(K)\overline{\otimes} B(H_1,H_2).
\end{equation}

Part (2) of Lemma \ref{3Sakai} extends to the case when $M$ is replaced by $B(H_1,H_2)$. 
More precisely, if $H_1,H_2$ are separable, then the natural embedding
of $L^\infty(\Omega)\otimes B(H_1,H_2)$ into 
$L^\infty_\sigma(\Omega;B(H_1,H_2))$
extends to a $w^*$-homeomorphic and isometric identification
\begin{equation}\label{H12}
L^\infty_\sigma(\Omega;B(H_1,H_2))\simeq L^\infty(\Omega)\overline{\otimes}B(H_1,H_2).
\end{equation}
To prove it, apply Lemma \ref{3Sakai} with $M= B(H_1\stackrel{2}{\oplus} H_2)$ and use the embedding (\ref{4embedding}).

For any 
$D\in L^\infty(\Omega)\overline{\otimes} B(H_1,H_2)$, we may associate, similarly to what is done before Lemma \ref{3cd2},
$$
1_s\otimes D_t \in 
L^\infty_s(\Omega)\overline{\otimes}L^\infty_t(\Omega)\overline{\otimes} B(H_1,H_2)
$$
by identifying $L^\infty(\Omega)\overline{\otimes} B(H_1,H_2)$
with $1_s\,\overline{\otimes} L^\infty_t(\Omega)\overline{\otimes} B(H_1,H_2)$.

Similarly, for any  $C\in L^\infty(\Omega)\overline{\otimes} B(H_2,H_1)$, we may associate
$$
1_t\otimes C_s\in 
L^\infty_s(\Omega)\overline{\otimes}L^\infty_t(\Omega)\overline{\otimes} B(H_2,H_1).
$$
by identifying $L^\infty(\Omega)\overline{\otimes} B(H_2,H_1)$
with $L^\infty_s(\Omega)\overline{\otimes} 1_t\overline{\otimes} B(H_2,H_1)$.

\begin{lemma}\label{4N}
Let $D,C$ as above and recall from (\ref{4Mult}) that they may be regarded as elements 
of $B(L^2(\Omega))\overline{\otimes} B(H_1,H_2)$ and 
$B(L^2(\Omega))\overline{\otimes} B(H_2,H_1)$, respectively. 
Let $N_1\subset B(H_1)$ be a von Neumann subalgebra.
The following assertions are equivalent.
\begin{itemize}
\item [(i)]
For all $u,v\in L^2(\Omega)$, $C((u\otimes v)\otimes I_{H_2})D$ belongs to
$B(L^2(\Omega))\overline{\otimes} N_1$. (Here $u\otimes v$ is the element
of $B(L^2(\Omega))$ defined by (\ref{uv}).)
\item [(ii)] The product $(1_t\otimes C_s)(1_s\otimes D_t)$ belongs
to $L^\infty_s(\Omega)\overline{\otimes}L^\infty_t(\Omega)\overline{\otimes} 
N_1$.
\end{itemize}
\end{lemma}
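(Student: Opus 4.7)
The plan is to reduce each of (i) and (ii) to an equivalent pointwise a.e.\ condition on the $B(H_1)$-valued function $(s,t)\mapsto C(s)D(t)$, where $C(\cdot)\in B(H_2,H_1)$ and $D(\cdot)\in B(H_1,H_2)$ are the $w^*$-measurable essentially bounded representatives of $C$ and $D$ given by (\ref{H12}). Throughout the argument I assume $H_1,H_2$ are separable, as is the intended setting for the application of this lemma.

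I would first reformulate (ii). A direct computation shows that $w:=(1_t\otimes C_s)(1_s\otimes D_t)$ acts on $f\in L^2(\Omega^2;H_1)$ by $[wf](s,t)=C(s)D(t)f(s,t)$. Hence, under the $B(H_1)$-analogue of the identification (\ref{3Sakai1}), $w$ corresponds to the function $(s,t)\mapsto C(s)D(t)\in L^\infty_\sigma(\Omega^2;B(H_1))$. Since $N_1$ is a $w^*$-closed subspace of $B(H_1)$, a standard annihilator argument based on (\ref{3Inclusion2}) shows that $L^\infty_\sigma(\Omega^2;N_1)\subset L^\infty_\sigma(\Omega^2;B(H_1))$ consists exactly of those functions taking values in $N_1$ almost everywhere. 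Therefore (ii) is equivalent to the pointwise condition $C(s)D(t)\in N_1$ for a.e.\ $(s,t)\in\Omega^2$.

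Next I analyze (i). A direct calculation gives, for $f\in L^2(\Omega;H_1)$,
$$\bigl[C(u\otimes v\otimes I_{H_2})D\,f\bigr](s)\,=\,v(s)\,C(s)\!\int_\Omega u(t)\,D(t)\,f(t)\,dt.$$
The commutation theorem yields $B(L^2(\Omega))\,\overline{\otimes}\,N_1=(I_{L^2(\Omega)}\otimes N_1')'$ inside $B(L^2(\Omega)\stackrel{2}{\otimes}H_1)$, so (i) is equivalent to the commutation of $C(u\otimes v\otimes I_{H_2})D$ with $I_{L^2(\Omega)}\otimes y$ for every $y\in N_1'$ and every $u,v\in L^2(\Omega)$. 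Expanding this commutation on elementary tensors $f=h\otimes x$ with $h\in L^2(\Omega)$, $x\in H_1$, and noting that every $f_2\in L^1(\Omega)$ factors as $f_2=uh$ with $u,h\in L^2(\Omega)$, the condition reduces---via the bicommutant theorem $(N_1')'=N_1$---to: for every $f_2\in L^1(\Omega)$, $C(s)\int_\Omega f_2(t)D(t)\,dt\in N_1$ for a.e.\ $s\in\Omega$.

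Finally, the equivalence of the two pointwise reformulations completes the proof. The direction (ii)$\Rightarrow$(i) is a Fubini plus $w^*$-closure argument: for a.e.\ $s$, $t\mapsto C(s)D(t)$ is essentially $N_1$-valued, and the $w^*$-integral of such a function against any $f_2\in L^1(\Omega)$ remains in $N_1$ by $w^*$-closedness. For (i)$\Rightarrow$(ii), I would use separability of $L^1(\Omega)$ together with the Lipschitz estimate $\bignorm{C(s)\!\int(f_2-g_2)(t)D(t)\,dt}\leq\norm{C}_\infty\norm{D}_\infty\norm{f_2-g_2}_1$ to produce a single null set $\Sigma\subset\Omega$ outside which the condition holds for all $f_2\in L^1(\Omega)$; for each $s\notin\Sigma$, the function $t\mapsto C(s)D(t)\in L^\infty_\sigma(\Omega;B(H_1))$ then has all $L^1$-slices in $N_1$, so by the annihilator characterization (\ref{3Inclusion2}) it is essentially $N_1$-valued, and Fubini yields (ii). The main obstacle is the bookkeeping of ``a.e.'' quantifiers in this last step, together with the extension of the $L^\infty_\sigma$-identifications of Section \ref{SNormal} from the $M$-valued to the $B(H_1,H_2)$-valued setting.
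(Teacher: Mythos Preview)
Your argument is correct under the separability hypotheses you impose, but it is a genuinely different route from the paper's. The paper never passes to pointwise representatives; instead it proves a single slice-map identity
\[
\bigl[(a\otimes b)\overline{\otimes} I\bigr]\bigl(C(u\otimes v\otimes I_{H_2})D\bigr)
= \bigl[(av\otimes bu)\overline{\otimes} I\bigr]\bigl((1_t\otimes C_s)(1_s\otimes D_t)\bigr)
\]
for all $u,v,a,b\in L^2(\Omega)$, first on elementary tensors and then by separate $w^*$-continuity of the product. Both directions of the lemma then follow immediately from Tomiyama's slice map property (Remark~\ref{3Slice}), using only that $L^2(\Omega)\otimes L^2(\Omega)$ is dense in $S^1(L^2(\Omega))$ and that $L^1_s(\Omega)\otimes L^1_t(\Omega)$ is dense in the predual of $L^\infty_s(\Omega)\overline{\otimes}L^\infty_t(\Omega)$.

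The trade-off is this. Your approach, via the commutation theorem and the $L^\infty_\sigma$-identifications, is concrete and explains \emph{why} the two conditions coincide at the level of the kernel $(s,t)\mapsto C(s)D(t)$. But it requires $H_1,H_2$ separable (for \eqref{H12} and the a.e.\ characterization of $L^\infty_\sigma(\Omega^2;N_1)$) and also $L^1(\Omega)$ separable (for the passage from countably many $f_2$'s to all of $L^1$). The lemma, however, is invoked in Section~\ref{S-NSC} (proof of Theorem~\ref{3Main}) precisely in the non-separable setting, so the paper's slice-map argument is not just cleaner but necessary there. A second dividend of the paper's route is that the displayed identity above is reused verbatim in the proof of Lemma~\ref{4Trace}.
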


\begin{proof}
Let $u,v,a,b\in L^2(\Omega)$. Following (\ref{uv}),
we consider $a\otimes b$ as an element
of $S^1(L^2(\Omega))$ and we let $(a\otimes b)\overline{\otimes} I$
be the associated slice map 
$B(L^2(\Omega))\overline{\otimes} B(H_1)\to
B(H_1)$. Likewise, we regard $av\otimes bu$ as an element
of $L^1_s(\Omega)\otimes L^1_t(\Omega)$ and we let
$(av\otimes bu)\overline{\otimes} I$ be the associated slice map
 $L^\infty_s(\Omega)\overline{\otimes}L^\infty_t(\Omega)\overline{\otimes} B(H_1)
\to B(H_1)$. We claim that 
\begin{equation}\label{4Magic}
\bigl[(a\otimes b)\overline{\otimes} I\bigr]\bigl(C((u\otimes v)\otimes I_{H_2})D\bigr)
= \bigl[(av\otimes bu)\overline{\otimes} I\bigr]\bigl((1_t\otimes C_s)(1_s\otimes D_t)\bigr).
\end{equation}
To prove this, assume first that
$D\in L^\infty(\Omega)\otimes B(H_1,H_2)$ and 
$C\in  L^\infty(\Omega)\otimes B(H_2,H_1)$. Thus,
$$
D=\sum_j  f_j\otimes T_j
\qquad\hbox{and}\qquad
C=\sum_i g_i\otimes S_i
$$
for some finite families $(f_j)_j$ in $L^\infty(\Omega)$, 
$(T_j)_j$ in $B(H_1,H_2)$, $(g_i)_i$ in $L^\infty(\Omega)$ and
$(S_i)_i$ in $B(H_2,H_1)$.

Recall from (\ref{4Mult}) that 
each $f_j$ (resp. $g_i$) is identified with the pointwise multiplication
by $f_j$ (resp. $g_i$). Then an elementary calculation yields 
$g_i(u\otimes v)f_j= (uf_j)\otimes(vg_i)$ in $B(L^2(\Omega))$. Consequently,
$$
C((u\otimes v)\otimes I_{H_2})D = \sum_{i,j}
(uf_j)\otimes(vg_i)\otimes S_iT_j.
$$
Hence, the left-hand side of (\ref{4Magic}) is
\begin{align*}
\bigl[(a\otimes b)\overline{\otimes} I\bigr]\bigl(C((u\otimes v)\otimes I_{H_2})D\bigr)
&=\sum_{i,j}
{\rm tr}\bigl((a\otimes b)[(uf_j)\otimes(vg_i)]\bigr) S_iT_j\\
& = \sum_{i,j} \Bigl(\int_\Omega avg_i\Bigr)
\Bigl(\int_\Omega buf_j\Bigr)S_iT_j.
\end{align*}
To compute the right-hand side of (\ref{4Magic}), write
$$
(1_t\otimes C_s)(1_s\otimes D_t)
=\Bigl(\sum_i  g_i\otimes 1\otimes S_i\Bigr)
\Bigl(\sum_j 1\otimes f_j\otimes T_j\Bigr)
=\sum_{i,j} g_i\otimes f_j\otimes S_iT_j.
$$
This implies
$$
\bigl[(av\otimes bu)\overline{\otimes}  I\bigr]\bigl((1_t\otimes C_s)(1_s\otimes D_t)\bigr)
 = \sum_{i,j} \Bigl(\int_\Omega avg_i\Bigr)
\Bigl(\int_\Omega buf_j\Bigr)S_iT_j,
$$
and (\ref{4Magic}) follows. Now since the product on von Neumann algebras is 
separately $w^*$-continuous, the above special case implies that (\ref{4Magic})
holds true in the general case.

We now prove the equivalence. Assume (i) and let $h,k\in L^1(\Omega)$.
There exist $u,v,a,b\in L^2(\Omega)$ such that $h=av$ and $k=bu$. 
Since $C((u\otimes v)\otimes I_{H_2})D$ belongs to
$B(L^2(\Omega))\overline{\otimes} N_1$, it follows from 
(\ref{4Magic}) that 
$\bigl[(h\otimes k)\overline{\otimes} I\bigr]\bigl((1_t\otimes C_s)(1_s\otimes D_t)\bigr)$
belongs to $N_1$. Since $L^1(\Omega)_s\otimes L^1_t(\Omega)$ is
dense in $(L^\infty_s(\Omega)\overline{\otimes}L^\infty_t(\Omega))_*$,
we deduce that $(1_t\otimes C_s)(1_s\otimes D_t)\in
L^\infty_s(\Omega)\overline{\otimes}L^\infty_t(\Omega)\overline{\otimes} N_1$
by applying the slice map property (see Remark \ref{3Slice}). 

Conversely, assume (ii) and fix $u,v\in L^2(\Omega)$.
Since  $(1_t\otimes C_s)(1_s\otimes D_t)\in
L^\infty_s(\Omega)\overline{\otimes}L^\infty_t(\Omega)\overline{\otimes} N_1$,
it follows from 
(\ref{4Magic}) that
$\bigl[(a\otimes b)\overline{\otimes} I\bigr]\bigl(C((u\otimes v)\otimes I_{H_2})D\bigr)$
belongs to $N_1$ for all $a,b\in L^2(\Omega)$. Since 
$L^2(\Omega)\otimes L^2(\Omega)$ is dense in $B(L^2(\Omega))_*$,
we deduce that $C((u\otimes v)\otimes I_{H_2})D\in
B(L^2(\Omega))\overline{\otimes} N_1$
by applying again the slice map property.
\end{proof}

\section{Proof of Theorem \ref{1Main}}\label{SProof}

\subsection{Proof of ``$(i)\,\Rightarrow\,(ii)$"}\label{61}
We let $\phi\in L^\infty(\Omega^2)$ and we assume that $\phi$
satisfies the assertion (i) of Theorem \ref{1Main}. 

Applying (\ref{2Dil1}) with $k=1$, 
we obtain a tracial von Neumann algebra
$(\M,\tau_{\mathcal M})$ with a separable predual, a trace preserving unital 
$w^*$-continuous $*$-homomorphism $J\colon B(L^2(\Omega))\to\M$
and a trace preserving $*$-automorphism 
$U\colon\M\to\M$ such that
\begin{equation}\label{4k=1}
T_\phi=J_1^*UJ.
\end{equation}
We fix a Hilbertian basis $(e_i)_{i\in I}$ of $L^2(\Omega)$
and we define
$E_{i j}=e_j\otimes e_i\in B(L^2(\Omega))$ for all $(i,j)\in I^2$.
Since $J$ is a unital $w^*$-continuous $*$-homomorphism, 
$\{J(E_{ij})\, :\, (i,j)\in I^2\}$ is a system of matrix
units of $\M$ (in the sense of \cite[Definition IV.1.7]{Tak}). Fix
$k_0\in I$ and set $q=J(E_{k_0k_0})$. This is a projection 
of $\M$. According to \cite[Proposition IV.1.8]{Tak} and its proof,
$$
m_{ij}:=J(E_{k_0 i}) m J(E_{jk_0})
$$
belongs to $q\M q$ for all $m\in\M$
and all $(i,j)\in I^2$, and the mapping
$$
\rho\colon \M\longrightarrow B(L^2(\Omega))\overline{\otimes}(q\M q),
\qquad \rho(m) = \sum_{(i,j)\in I^2} E_{ij}\otimes m_{ij},
$$
is a $*$-isomorphism. (Here the summation is taken in the $w^*$-topology of
$\M$.) The restriction of $\tau_{\footnotesize{\M}}$ to $q\M q$ 
is semi-finite, let us call it $\tau_1$. Equip $B(L^2(\Omega))\overline{\otimes}(q\M q)$
with ${\rm tr}\overline{\otimes}\tau_1$, see Remark \ref{TP-trace}.
Then $\rho$ is trace preserving. To check this, observe that 
$J(E_{ik_0})J(E_{k_0 i}) =J(E_{ii})$ for all $i\in I$ and that
$\sum_{i\in I} J(E_{ii}) = 1_{\mathcal{M}}$.
Hence for any $m\in\M^+$, each
$m_{ii}$ belongs to $(q\M q)^+$ and we have
\begin{align*}
({\rm tr}\overline{\otimes}\tau_1)\bigl(\rho(m)\bigr)
& = \sum_{i\in I}\tau_{\mathcal{M}}\bigl( J(E_{k_0 i})m J(E_{ik_0})\bigr)\\
& = \sum_{i\in I} \tau_{\mathcal{M}}\bigl(J(E_{i i}) m \bigr)\\
& = \tau_{\mathcal{M}} \Bigl(\Bigl(\sum_{i\in I} J(E_{ii})\Bigr) m \Bigr) = \tau_{\mathcal{M}}(m).
\end{align*}
It is plain that $\rho\bigl(J(E_{ij})\bigr)= E_{ij}\otimes q$ for all $(i,j)\in I^2$.
Since $J$ is $w^*$-continuous, this implies that 
$\rho\circ J (z) = z \otimes q,$ for all $z\in B(L^2(\Omega))$.
Since both $J$ and $\rho$ are trace preserving, this implies that $\tau_1(q)=1$.

With $N_1=q\M q$, it follows from above that
changing $J$ into $\rho\circ J$ and changing $U$ into $\rho\circ U\circ\rho^{-1}$, 
we may assume that
$$
\M=B(L^2(\Omega))\overline{\otimes} N_1, 
$$
for some normalized tracial von Neumann algebra
$(N_1,\tau_1)$, and
\begin{equation}\label{5J}
J(z) =z\otimes 1_{N_1},\qquad z
\in B(L^2(\Omega)).
\end{equation}
Moreover $N_1$ has a separable predual.

We fix a Hilbert space 
$H_1$ such that $N_1\subset B(H_1)$ as a von Neumann subalgebra. 
We may assume that $H_1$ is separable, see Remark \ref{2Dil2}, (5).
By (\ref{Hilbert-TP}), we have 
$B(L^2(\Omega))\overline{\otimes}B(H_1)\simeq B(L^2(\Omega;H_1))$.
Hence $\M\subset  B(L^2(\Omega;H_1))$ as a von Neumann subalgebra. 
We use again \cite[Proposition IV.1.8]{Tak} and its proof as follows. 
We note that since $U$ is a unital $w^*$-continuous $*$-homomorphism, 
$\{U(E_{ij}\otimes 1_{N_1})\, :\, (i,j)\in I^2\}$ is a system of matrix
units of $\M$. Then we set
$$
q_0 =U(E_{k_0k_0}\otimes 1_{N_1}),\quad
H_2= q_0\bigl(L^2(\Omega;H_1)\bigr)
\quad \hbox{and} \quad
N_2 =q_0\M q_0,
$$
and we equip the latter von Neumann algebra with the restriction of the trace of $\M$.
We let $\tau_2$ denote this trace on $N_2$.
Then arguing as above, 
we find a trace preserving $*$-isomorphism
\begin{equation}\label{4pi0}
\pi \colon  B(L^2(\Omega))\overline{\otimes} N_1
\longrightarrow B(L^2(\Omega))\overline{\otimes} N_2
\end{equation}
such that $\pi\circ U(z\otimes 1_{N_1}) = z\otimes 1_{N_2}$
for all $z\in B(L^2(\Omega))$. We deduce that $\tau_2$
is normalized.
Moreover, $N_2\subset B(H_2)$ as a von Neumann subalgebra, and
$H_2$ is separable.

Regard $B(L^2(\Omega))\overline{\otimes} N_1\subset B(L^2(\Omega;H_1))$
as above and 
$B(L^2(\Omega))\overline{\otimes} N_2\subset B(L^2(\Omega;H_2))$
similarly.
Then the argument in the proof
of \cite[Proposition IV.1.8]{Tak} actually shows the existence
of a unitary operator
$$
D\colon L^2(\Omega;H_1)\longrightarrow L^2(\Omega; H_2)
$$
such that 
\begin{equation}\label{4pi}
\pi(R) = DRD^*,\qquad R\in B(L^2(\Omega))\overline{\otimes} N_1.
\end{equation}
This implies that
\begin{equation}\label{4U}
U(z\otimes 1_{N_1}) = D^*(z\otimes 1_{N_2})D,\qquad 
z\in B(L^2(\Omega)).
\end{equation}

We shall now use properties of Schur multipliers.
Note that $1_{N_1}=I_{H_1} \in L^1(N_1,\tau_1)$ and recall the embedding $S^1(L^2(\Omega))\otimes L^1(N_1,\tau_1)
\subset \M_*$ from (\ref{3Predual}).
The relation (\ref{4k=1}) and the identity (\ref{5J}) imply that for all $u,v,a,b\in L^2(\Omega)$, we have
\begin{equation}\label{4Exp}
\bigl\langle U((u\otimes v)\otimes I_{H_1}),(a\otimes b)\otimes 
I_{H_1}\bigr\rangle_{{\mathcal M},
{\mathcal M}_*}\,=\,
{\rm tr}\Bigl(\bigl[T_\phi(u\otimes v)\bigr]\bigl[a\otimes b\bigr]\Bigr).
\end{equation}
In this formula, $u\otimes v$ (and similarly $a\otimes b$)
is the rank one operator on $L^2(\Omega)$
defined by (\ref{uv}).
Note that the right-hand side of (\ref{4Exp}) can be expressed by (\ref{2Trace}).

According to (\ref{4Tensor}), we regard $D$ as an element of 
$B(L^2(\Omega))\overline{\otimes} B(H_1,H_2)$.   

\begin{lemma}\label{4D}
Under the representation  (\ref{4Mult}), we have
$$
D\in L^\infty(\Omega)\overline{\otimes} B(H_1,H_2).
$$
\end{lemma}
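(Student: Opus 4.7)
The plan is to prove that the unitary $D$ intertwines the natural pointwise multiplication actions of $L^\infty(\Omega)$ on $L^2(\Omega;H_1)$ and on $L^2(\Omega;H_2)$, and then to conclude $D\in L^\infty(\Omega)\overline{\otimes} B(H_1,H_2)$ by a standard commutant argument.

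First, I would establish that $T_\phi$ acts as the identity on $L^\infty(\Omega)\subset B(L^2(\Omega))$. Schur multipliers satisfy the $L^\infty(\Omega)$-bimodule identity
\[
T_\phi(R_{\theta_1}xR_{\theta_2})\,=\,R_{\theta_1}T_\phi(x)R_{\theta_2},\qquad \theta_1,\theta_2\in L^\infty(\Omega),\ x\in B(L^2(\Omega)),
\]
which one checks directly on the rank-one kernel operators $S_{u\otimes v}$ and extends by $w^*$-continuity. Since $T_\phi$ is unital (Remark \ref{2Dil2}(3)), taking $x=I$ and $\theta_2=1$ yields $T_\phi(R_\theta)=R_\theta$ for every $\theta\in L^\infty(\Omega)$.

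Next, I would lift this into a fixed-point statement for $U$. Using (\ref{2Dil1}) at all integer powers $k\geq 0$ together with (\ref{5J}), the identity $T_\phi^k(R_\theta)=R_\theta$ rewrites as
\[
E_J\bigl(U^k(R_\theta\otimes 1_{N_1})\bigr)\,=\,R_\theta\otimes 1_{N_1},\qquad k\geq 0,
\]
where $E_J:=J\circ J_1^*$ is the faithful normal trace-preserving conditional expectation of $\M$ onto $J(B(L^2(\Omega)))=B(L^2(\Omega))\otimes 1_{N_1}$. Setting $x:=U(R_\theta\otimes 1_{N_1})$ and using that $U$ is a $*$-homomorphism, the same reasoning applied to $|\theta|^2\in L^\infty(\Omega)$ yields $E_J(x^*x)=R_{|\theta|^2}\otimes 1_{N_1}=E_J(x)^*E_J(x)$. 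The standard identity
\[
E_J\bigl((x-E_J(x))^*(x-E_J(x))\bigr)\,=\,E_J(x^*x)-E_J(x)^*E_J(x)
\]
then vanishes, and the faithfulness of $\tau_{\mathcal M}$ (combined with $\tau_{\mathcal M}\circ E_J=\tau_{\mathcal M}$) forces $x=E_J(x)$, that is, $U(R_\theta\otimes 1_{N_1})=R_\theta\otimes 1_{N_1}$ for every $\theta\in L^\infty(\Omega)$. Substituting this into (\ref{4U}) and using the unitarity of $D$, we obtain the intertwining relation $D(R_\theta\otimes I_{H_1})=(R_\theta\otimes I_{H_2})D$ for all $\theta\in L^\infty(\Omega)$.

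To conclude, I would use a commutant argument. Via (\ref{4embedding}) and (\ref{4Tensor}), regard $D$ as an element of $B(L^2(\Omega))\overline{\otimes} B(H_1\stackrel{2}{\oplus}H_2)$ whose $2\times 2$ block decomposition is supported in the $(2,1)$ corner. The intertwining just obtained means that $D$ commutes with every $R_\theta\otimes I_{H_1\stackrel{2}{\oplus}H_2}$ for $\theta\in L^\infty(\Omega)$; by the tensor product commutation theorem and the equality $L^\infty(\Omega)'=L^\infty(\Omega)$ in $B(L^2(\Omega))$, this places $D$ in $L^\infty(\Omega)\overline{\otimes} B(H_1\stackrel{2}{\oplus} H_2)$. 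Since $D$ is supported in the $(2,1)$ block, the conclusion $D\in L^\infty(\Omega)\overline{\otimes} B(H_1,H_2)$ follows. I expect the first step---pinning down that $T_\phi$ acts as the identity on $L^\infty(\Omega)$---to be the main obstacle, as this is the only place where the specific structure of Schur multipliers is used; the remaining steps (the multiplicative-domain/Cauchy--Schwarz-equality trick and the tensor product commutant theorem) are standard von Neumann-algebraic manipulations.
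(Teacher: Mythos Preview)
Your argument is correct and follows a genuinely different route from the paper's. The paper proceeds by a direct trace computation: for disjoint finite-measure sets $E,F\subset\Omega$, it forms $V_{E,F}=(\chi_F\otimes\chi_F\otimes I_{H_2})D(\chi_E\otimes\chi_E\otimes I_{H_1})$, shows via (\ref{4Exp}) and (\ref{2Trace}) that $({\rm tr}\overline{\otimes}\tau_1)(V_{E,F}^*V_{E,F})=\int_{\Omega^2}\phi\,\chi_E\chi_E\chi_F\chi_F=0$, slices this down to see that each $D_\eta=(I\overline{\otimes}\eta)(D)$ maps $\chi_E$ into functions supported in $E$, and then invokes Lemma~\ref{4MultOp} to conclude $D_\eta\in L^\infty(\Omega)$, finishing with the slice map property. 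Your approach instead extracts the algebraic content directly: the $L^\infty(\Omega)$-bimodule property of Schur multipliers plus unitality gives $T_\phi|_{L^\infty(\Omega)}=\mathrm{id}$, the Cauchy--Schwarz equality trick for the faithful conditional expectation $E_J$ (faithfulness holds because $\tau_1$ is a faithful state on $N_1$, so $I\overline{\otimes}\tau_1$ kills no positive element) promotes this to $U(R_\theta\otimes 1_{N_1})=R_\theta\otimes 1_{N_1}$, and then (\ref{4U}) together with the tensor-product commutation theorem finishes. Your route is cleaner and more conceptual, bypassing the indicator-function characterization of Lemma~\ref{4MultOp}; the paper's route is more self-contained, relying only on elementary computations rather than the commutation theorem. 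One minor remark: you only use (\ref{2Dil1}) at $k=1$ (applied once to $\theta$ and once to $|\theta|^2$), so the phrase ``at all integer powers $k\geq 0$'' is unnecessary.
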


\begin{proof}
Let $E,F\subset \Omega$ be measurable sets of finite measure.
Consider 
$$
V_{E,F} = (\chi_F\otimes \chi_F\otimes I_{H_2})D(\chi_E\otimes \chi_E\otimes I_{H_1})
\,\in B(L^2(\Omega))\overline{\otimes} B(H_1,H_2).
$$
Since $(\chi_F\otimes \chi_F\otimes I_{H_2})$ is a self-adjoint projection, 
we have 
$$
V_{E,F}^*V_{E,F} = (\chi_E\otimes \chi_E\otimes I_{H_1})D^*
(\chi_F\otimes \chi_F\otimes I_{H_2})D(\chi_E\otimes \chi_E\otimes I_{H_1}).
$$
We regard this product as an element of $B(L^2(\Omega))\overline{\otimes}B(H_1)$. According to  (\ref{4U}),
$$
D^*(\chi_F\otimes \chi_F\otimes I_{H_2})D = U(\chi_F\otimes \chi_F\otimes I_{H_1})
$$
belongs to 
$B(L^2(\Omega))\overline{\otimes}N_1$. Hence
$V_{E,F}^*V_{E,F}$ actually belongs to $B(L^2(\Omega))\overline{\otimes}N_1$.
Moreover using (\ref{4Exp}) and (\ref{2Trace}), we have
\begin{align*}
({\rm tr}\overline{\otimes}\tau_1)\bigl(V_{E,F}^*V_{E,F}\bigr)
& = ({\rm tr}\overline{\otimes}\tau_1)\Bigl(\bigl[U(\chi_F\otimes \chi_F\otimes I_{H_1})\bigr]
\bigl[\chi_E\otimes \chi_E\otimes I_{H_1}\bigr]\Bigr)\\
&=\bigl\langle U(\chi_F\otimes \chi_F\otimes I_{H_1}), \chi_E\otimes \chi_E\otimes 
I_{H_1}\bigr\rangle_{{\mathcal M},
{\mathcal M}_*}\\
& = {\rm tr}\Bigl(\bigl[T_\phi(\chi_F\otimes \chi_F)\bigr]
\bigl[\chi_E\otimes \chi_E\bigr]\Bigr)\\
& = \int_{\Omega^2}\phi(s,t)\chi_F(t)\chi_F(s)\chi_E(s)\chi_E(t)\, dtds.
\end{align*}
Consequently, 
$({\rm tr}\overline{\otimes}\tau_1)\bigl(V_{E,F}^*V_{E,F}\bigr)=0$ if 
$E$ and $F$ are disjoint. Hence 
$V_{E,F}=0$ if $E$ and $F$ are disjoint.

For any $\eta\in B(H_1,H_2)_*$, set
$D_\eta = (I_{B(L^2)}\overline{\otimes}\eta)(D)\in B(L^2(\Omega))$. 
For all $z\in B(L^2(\Omega))$ and
$w\in B(H_1,H_2)$, we have
\begin{align*}
(I_{B(L^2)}\otimes\eta)\bigl[
(\chi_F\otimes \chi_F\otimes I_{H_2})(z\otimes w)
&(\chi_E\otimes \chi_E\otimes I_{H_1})\bigr]\\
&= 
(I_{B(L^2)}\otimes\eta)\bigl[
(\chi_F\otimes \chi_F)z(\chi_E\otimes \chi_E)\otimes w\bigr]\\
&=\eta(w)(\chi_F\otimes \chi_F)z(\chi_E\otimes \chi_E)\\
&= (\chi_F\otimes \chi_F)\bigl[(I_{B(L^2)}\otimes\eta)(z\otimes w)\bigr]
(\chi_E\otimes \chi_E).
\end{align*}
Approximating $D$ by elements of the algebraic 
tensor product $B(L^2(\Omega))\otimes B(H_1,H_2)$, we deduce  that  
$$
(I_{B(L^2)}\overline{\otimes}\eta)(V_{E,F}) = 
(\chi_F\otimes \chi_F)D_\eta
(\chi_E\otimes \chi_E).
$$
Hence 
$$
(\chi_F\otimes \chi_F)D_\eta(\chi_E\otimes \chi_E)=0
$$
if $E$ and $F$ are disjoint. Since
$$
(\chi_F\otimes \chi_F)D_\eta(\chi_E\otimes \chi_E) =
\Bigl(\int_\Omega D_\eta(\chi_E)\chi_F\Bigr)\chi_E\otimes\chi_F,
$$
this implies that 
$$
\int_\Omega D_\eta(\chi_E)\chi_F\,=0
$$
if $E$ and $F$ are disjoint. 
We deduce that $D_\eta(\chi_E)$ has support in $E$ whenever 
$E\subset\Omega$ is a measurable set of finite measure.
Applying Lemma \ref{4MultOp}, we obtain that
$D_\eta\in L^\infty(\Omega)$ for all $\eta\in B(H_1,H_2)_*$.
The result therefore follows from the slice map property (see Remark \ref{3Slice}).
\end{proof}

\begin{lemma}\label{4Ess}
The product $(1_t\otimes D_s^*)(1_s\otimes D_t)$
belongs to $L^\infty_s(\Omega)\overline{\otimes}L^\infty_t(\Omega)\overline{\otimes} N_1$.
\end{lemma}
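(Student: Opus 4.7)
The plan is to apply Lemma \ref{4N} directly to the pair $(D^*,D)$, and to derive condition (i) of that lemma from the conjugation identity (\ref{4U}).

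To set up Lemma \ref{4N}, one needs $D\in L^\infty(\Omega)\overline{\otimes}B(H_1,H_2)$ and some $C\in L^\infty(\Omega)\overline{\otimes}B(H_2,H_1)$. The first holds by Lemma \ref{4D}. Taking adjoints is a $w^*$-continuous anti-linear involution that sends $L^\infty(\Omega)$ into itself (pointwise multipliers go to their complex conjugate multipliers) and exchanges $B(H_1,H_2)$ with $B(H_2,H_1)$, so $D^*\in L^\infty(\Omega)\overline{\otimes}B(H_2,H_1)$ and Lemma \ref{4N} indeed applies to the pair $(C,D)=(D^*,D)$.

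To verify condition (i) of Lemma \ref{4N}, fix $u,v\in L^2(\Omega)$ and view the rank-one operator $u\otimes v$ as an element of $B(L^2(\Omega))$. Since $H_2=q_0(L^2(\Omega;H_1))$ and $N_2=q_0\M q_0\subset B(H_2)$, the unit of $N_2$ acts on $H_2$ as the identity, i.e.\ $1_{N_2}=I_{H_2}$. Substituting $z=u\otimes v$ into (\ref{4U}) gives
\begin{equation*}
D^*(u\otimes v\otimes I_{H_2})D \,=\, U(u\otimes v\otimes 1_{N_1}).
\end{equation*}
The right-hand side belongs to $\M=B(L^2(\Omega))\overline{\otimes}N_1$ because $U$ is a $*$-automorphism of $\M$; hence so does the left-hand side. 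This is exactly assertion (i) of Lemma \ref{4N} for the pair $(D^*,D)$, so assertion (ii) follows, which is precisely the claim that $(1_t\otimes D_s^*)(1_s\otimes D_t)$ lies in $L^\infty_s(\Omega)\overline{\otimes}L^\infty_t(\Omega)\overline{\otimes}N_1$.

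I do not foresee any genuine obstacle: the two technical inputs (the Schur-multiplier step placing $D$ in $L^\infty(\Omega)\overline{\otimes}B(H_1,H_2)$, and the slice-map equivalence) are already packaged in Lemmas \ref{4D} and \ref{4N}, and the present statement is essentially a repackaging of (\ref{4U}) through them. The only small point requiring care is the identification $1_{N_2}=I_{H_2}$, which is automatic from the cut-down construction that produced $H_2$ and $N_2$.
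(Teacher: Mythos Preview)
Your proof is correct and follows essentially the same approach as the paper: verify condition (i) of Lemma \ref{4N} for the pair $(D^*,D)$ by recognizing $D^*(u\otimes v\otimes I_{H_2})D$ as an element of $B(L^2(\Omega))\overline{\otimes}N_1$. The only cosmetic difference is that the paper invokes (\ref{4pi}) to write this as $\pi^{-1}(u\otimes v\otimes I_{H_2})$, whereas you invoke (\ref{4U}) to write it as $U(u\otimes v\otimes 1_{N_1})$; since $\pi\circ U(z\otimes 1_{N_1})=z\otimes 1_{N_2}$ these are the same element, and both lie in $\M=B(L^2(\Omega))\overline{\otimes}N_1$.
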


\begin{proof}
For all $u,v\in L^2(\Omega)$, we have
$D^*((u\otimes v)\otimes I_{H_2})D=\pi^{-1}((u\otimes v)\otimes I_{H_2})$,
by (\ref{4pi}), and the latter belongs to $B(L^2(\Omega))\overline{\otimes} N_1$.
The result therefore follows from  Lemma 
\ref{4N}.
\end{proof}

\begin{lemma}\label{4Trace}
In the space $L^\infty(\Omega^2)$, we have
\begin{equation}\label{4Trace1}
\phi= \bigl[I_{L^\infty(\Omega^2)}\overline{\otimes}\tau_1\bigr]
\bigl((1_t\otimes D_s^*)(1_s\otimes D_t)\bigr).
\end{equation}
\end{lemma}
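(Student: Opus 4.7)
The plan is to identify $\phi$ with
$\Psi := \bigl[I_{L^\infty(\Omega^2)}\overline{\otimes}\tau_1\bigr]\bigl((1_t\otimes D_s^*)(1_s\otimes D_t)\bigr)$
as elements of $L^\infty(\Omega^2)\simeq L^1(\Omega^2)^*$ by testing both sides against a dense family of predual functionals. Observe first that $\Psi$ is a well-defined element of $L^\infty(\Omega^2)$ by Lemma \ref{4Ess}. Since $L^2(\Omega)\cdot L^2(\Omega)=L^1(\Omega)$ and $L^1(\Omega)\otimes L^1(\Omega)$ is dense in $L^1(\Omega^2)$, it suffices to check that $\phi$ and $\Psi$ induce equal integrals against every elementary tensor of the form $(av)\otimes(bu)$ with $a,v,b,u\in L^2(\Omega)$.

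For $\phi$, formula (\ref{2Trace}) yields $\int_{\Omega^2}\phi(s,t)(av)(s)(bu)(t)\,ds\,dt={\rm tr}\bigl([T_\phi(u\otimes v)][a\otimes b]\bigr)$. Using (\ref{4k=1}) and (\ref{4Exp}), this equals the predual pairing $\langle U(u\otimes v\otimes I_{H_1}),\,a\otimes b\otimes I_{H_1}\rangle_{\mathcal M,\mathcal M_*}$; unfolding the duality as a trace on $\mathcal M=B(L^2(\Omega))\overline{\otimes}N_1$ and substituting (\ref{4U}) for $U(u\otimes v\otimes I_{H_1})$, it becomes
$$
(\ast)\qquad ({\rm tr}\overline{\otimes}\tau_1)\bigl((a\otimes b\otimes I_{H_1})\,D^*(u\otimes v\otimes I_{H_2})\,D\bigr).
$$

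For $\Psi$, interpreting $I_{L^\infty(\Omega^2)}\overline{\otimes}\tau_1$ as a slice map gives $\int_{\Omega^2}\Psi(s,t)(av)(s)(bu)(t)\,ds\,dt=\tau_1\bigl[(av\otimes bu)\overline{\otimes}I_{N_1}\bigr]\bigl((1_t\otimes D_s^*)(1_s\otimes D_t)\bigr)$. At this point I would invoke the key identity (\ref{4Magic}) of Lemma \ref{4N} with $C:=D^*$, which lies in $L^\infty(\Omega)\overline{\otimes}B(H_2,H_1)$ by Lemma \ref{4D} together with the fact that adjunction preserves that tensor-product structure. The identity transforms the preceding expression into $\tau_1\bigl[(a\otimes b)\overline{\otimes}I_{N_1}\bigr]\bigl(D^*(u\otimes v\otimes I_{H_2})D\bigr)$, which a routine elementary-tensor computation identifies with $(\ast)$ via the cyclicity of ${\rm tr}\overline{\otimes}\tau_1$.

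The main technical obstacle is the clean application of (\ref{4Magic}): one must verify that its hypotheses are met with $C=D^*$, and that the output---a priori a mere element of $B(H_1)$---actually lies in $N_1$, so that $\tau_1$ can be applied. Lemma \ref{4D} handles the first point and Lemma \ref{4Ess} handles the second. Everything else is slice-map bookkeeping combined with the density argument of the first paragraph, which together force $\phi=\Psi$ in $L^\infty(\Omega^2)$.
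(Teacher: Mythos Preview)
Your proposal is correct and follows essentially the same route as the paper: test both sides against elementary tensors $(av)\otimes(bu)$ in $L^1(\Omega^2)$, rewrite the $\phi$-side via (\ref{2Trace}), (\ref{4Exp}) and (\ref{4U}), rewrite the $\Psi$-side via the slice-map identity (\ref{4Magic}) with $C=D^*$, and match. The only cosmetic difference is that the paper writes the common value as $\tau_1\bigl([a\otimes b\,\overline{\otimes}\,I](D^*(u\otimes v\otimes 1_{N_2})D)\bigr)$ directly rather than passing through your expression $(\ast)$ and invoking cyclicity, but this is the same computation.
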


\begin{proof} 
Let $\widetilde{\phi}\in L^\infty(\Omega^2)$ be equal to the right-hand
side of (\ref{4Trace1}).
Let $a,b,u,v\in L^2(\Omega)$. By (\ref{2Trace}), 
(\ref{4Exp}), (\ref{4U}) and (\ref{4Magic}), we have
\begin{align*}
\int_{\Omega^2} \phi(s,t)u(t)v(s)a(s)b(t)\, dtds &
= \bigl\langle U((u\otimes v)\otimes I_{H_1}), (a\otimes b)\otimes 
I_{H_1}\bigr\rangle\\
& =  \bigl\langle D^*((u\otimes v)\otimes 1_{N_2})D, 
(a\otimes b)\otimes 1_{N_1} \bigr\rangle\\
&  =\tau_1\Bigl(\bigl[(a\otimes b)\overline{\otimes} I_{N_1}\bigr]\bigl(D^*((u\otimes v)\otimes 1_{N_2})D\bigr)\Bigr)\\
& = \tau_1\Bigl(\bigl[(av\otimes bu)\overline{\otimes} I_{N_1}
\bigr]\bigl((1_t\otimes D^*_s)(1_s\otimes D_t)\bigr)\Bigr)\\
&= \bigl[av\otimes bu \otimes\tau_1\bigr]\bigl((1_t\otimes D^*_s)(1_s\otimes D_t)\bigr)\\
&=\int_{\Omega^2} \widetilde{\phi}(s,t)  a(s)v(s)b(t)u(t)\, dtds.
\end{align*}
Here $I_{N_1}$ denotes the identity operator on $N_1$.
Consequently,
$$
\int_{\Omega^2} \phi(s,t)  f(s)g(t)\, dtds = 
\int_{\Omega^2} \widetilde{\phi}(s,t)
f(s)g(t)\, dtds 
$$
for all $f,g\in L^1(\Omega)$. The result follows.
\end{proof}

We can now prove the assertion (ii) of Theorem \ref{1Main}.
Recall that $H_1$ and $H_2$ are separable.
Let $d\in L^\infty_\sigma(\Omega; B(H_1,H_2))$ be associated with 
$D$ in the identification (\ref{H12}). By Lemmas \ref{3cd2} and \ref{4Ess}, we have 
$d^*\odot d\sim (1_t\otimes D_s^*)(1_s\otimes D_t)$ and
\begin{equation}\label{d*d}
d^*\odot d\in L^\infty_\sigma(\Omega;N_1).
\end{equation}
Further for any $f,g\in L^1(\Omega)$, it follows from (\ref{4Trace1}) and Remark \ref{3Sakai2} that
\begin{align*}
\int_{\Omega^2} f(s)g(t)\phi(s,t)\, dtds\,&= [f\otimes g\otimes\tau_1]\bigl(
(1_t\otimes D_s^*)(1_s\otimes D_t)\bigr)\\
&=\int_{\Omega^2} f(s)g(t)\tau_1\bigl( d(s)^*d(t)\bigr)\, dtds.
\end{align*}
This implies that
\begin{equation}\label{6Factor}
\phi(s,t) =\tau_{1}\bigl(d(s)^*d(t)\bigr), 
\qquad{\rm for\  a.e.}\ (s,t)\in\Omega^2.
\end{equation}.

Recall that as an operator from $L^2(\Omega;H_1)$ to  $L^2(\Omega;H_2)$,
$D$ is a unitary. Hence we both have  $D^*D=1_{L^\infty}\otimes I_{H_1}$ and
$DD^*=1_{L^\infty}\otimes I_{H_2}$. By Lemma \ref{3Sakai}, this implies
that $d(s)^*d(s)=1_{H_1}$ and $d(s)d(s)^*=I_{H_2}$ for almost
every $s\in\Omega$.
Changing the values of $d$ on a null subset of $\Omega$, we can henceforth
assume that for all $s\in\Omega$, $d(s)\colon H_1\to H_2$ is a unitary.

Consider the pre-annihilator $(N_{1})_\perp\subset B(H_1)_*$.
It follows from (\ref{d*d}) that for all $f,g\in L^1(\Omega)$ and for all 
$\eta\in (N_{1})_\perp$, we have
$$
\int_{\Omega^2} f(s)g(t)\langle d(s)^* d(t),\eta\rangle\, dtds\,=0.
$$
Since $(\Omega,\mu)$ is separable, $L^1(\Omega)$ is separable. 
Also, $B(H_1)_*$ is separable, hence $(N_{1})_\perp$ is separable.
Let $(g_k)_{k\geq 1}$ and $(\eta_n)_{n\geq 1}$ be dense sequences
of $L^1(\Omega)$ and $(N_{1})_\perp$, respectively. 
It follows from above that for all $n,m\geq 1$, 
$\int_\Omega g_k(t)\langle d(s)^* d(t),\eta_n\rangle\, dt=0$ for almost every 
$s\in\Omega$. Consequently, there exists $s_0\in\Omega$
such that 
\begin{equation}\label{Null}
\int_\Omega  g_k(t)\langle d(s_0)^* d(t),\eta_n\rangle\, dt=0
\end{equation}
for all $n,m\geq 1$.

Define $\widetilde{d}\colon\Omega\to B(H_1)$ by $\widetilde{d}(t)=d(s_0)^* d(t)$.
Clearly $\widetilde{d}$ is $w^*$-measurable and essentially bounded. Since
$d(s_0)$ is a unitary, we have $\widetilde{d}(t)^*\widetilde{d}(t)=d(t)^*d(t)=I_{H_1}$
for almost every $t\in\Omega$. 
Hence as an element of
$L^\infty_\sigma(\Omega, B(H_1))$,  $\widetilde{d}$ is a unitary.
According to (\ref{Null}), $\widetilde{d}$ belongs to
the annihilator of 
$$
{\rm Span}\{g_k\otimes \eta_n\, :\, k,n\geq 1\}\subset L^1(\Omega;B(H_1)_*).
$$
Since $\{g_k\otimes \eta_n\, :\, k,n\geq 1\}$ is dense in $L^1(\Omega;(N_{1})_\perp)$, this implies that 
$\widetilde{d}$ belongs to $L^1(\Omega;(N_{1})_\perp)^\perp$. By (\ref{3Inclusion2}), this means
that $\widetilde{d}\in L^\infty_\sigma(\Omega, N_1)$.

Finally, using again the fact that $d(s_0)$ is a unitary, we deduce from (\ref{6Factor}) that
$\phi(s,t) =\tau_{1}\bigl(\widetilde{d}(s)^*\widetilde{d}(t)\bigr)$ for almost every 
$(s,t)\in\Omega^2$. Hence the assertion (ii) of Theorem \ref{1Main} is satisfied, with $N=N_1$ 
and $\widetilde{d}$ instead of $d$.

\subsection{Proof of ``$(ii)\,\Rightarrow\,(i)$"}\label{62}
Assume that $\phi\in L^\infty(\Omega^2)$ satisfies 
the assertion (ii) of Theorem \ref{1Main}, for some normalized tracial von Neumann 
algebra $(N,\tau_N)$ with a separable predual and
some unitary $d\in L^\infty_\sigma(\Omega;N)$.

Since  $\tau_N\colon N\to\Cdb$ is a faithful state, we may define
$$
N^\infty = \mathop{\overline{\otimes}}_{\mathbb Z} N,
$$
the infinite tensor product of $(N,\tau_N)$ over the index set $\Zdb$, as
considered in \cite[Definition XIV.1.6]{Tak3}. The associated 
state $\tau_\infty := \mathop{\overline{\otimes}}_{\mathbb Z}\tau_N$ on $N^\infty$
is a normal faithful tracial state. Thus, $(N^\infty,\tau_\infty)$ is a 
normalized tracial von Neumann algebra.
We let
$$
\M=B(L^2(\Omega))\overline{\otimes} N^\infty,
$$
equipped with its natural trace (see Remark \ref{TP-trace}). The von Neumann
algebra $N^\infty$ has a separable predual, hence $\M$ has a separable predual.

Let $1_\infty$
denote the unit of $N^\infty$ and let $J\colon B(L^2(\Omega))\to \M$ be
the $w^*$-continuous trace preserving unital $*$-homomorphism defined by
$$
J(z)= z\otimes 1_\infty,\qquad z\in B(L^2(\Omega)).
$$
We are now going to construct a trace preserving $*$-automorphism $U\colon\M\to\M$
such that 
$T=T_\phi$ satisfies (\ref{2Dil1}).

Let $1$ denote the unit of $N$.
For any integer $m\geq 1$  and for any finite family $(x_k)_{k=-m}^{m}$ in
$N$, set
$$
\mathop{\otimes}_{k} x_k = \cdots 1\otimes 1\otimes x_{-m}\otimes\cdots\otimes
x_0\otimes x_1\otimes\ldots \otimes x_m\otimes 1\otimes 1 \cdots,
$$
where each $x_k$ is in position $k$. Let $\sigma\colon N^\infty\to N^\infty$ be the 
$*$-automorphism such that 
$$
\sigma\bigl(\mathop{\otimes}_{k} x_k \bigr)=\mathop{\otimes}_{k} x_{k-1}
$$
for all finite families $(x_k)_{k}$ in $N$.  It is plain that $\sigma$ is trace preserving.
We set 
$$
S=I_{B(L^2(\Omega))}\overline{\otimes}\sigma\colon\M\longrightarrow \M. 
$$

Consider $N_+= \mathop{\overline{\otimes}}_{k\geq 1} N$ and 
$N_-= \mathop{\overline{\otimes}}_{k\leq -1} N$, so that we can write
$$
N^\infty=N_-\overline{\otimes} N\overline{\otimes}N_+.
$$
This allows us to write
\begin{equation}\label{6Dec}
\M= N_-\overline{\otimes} \bigl(B(L^2(\Omega))\overline{\otimes} N\bigr)\overline{\otimes}N_+.
\end{equation}
Let $D\in L^\infty(\Omega)\overline{\otimes} N$ such that $d\sim D$
and recall that $L^\infty(\Omega)\overline{\otimes} 
N\subset B(L^2(\Omega))\overline{\otimes} N$,
by (\ref{4Mult}). 
Thus we may define
$$
\gamma\colon B(L^2(\Omega))\overline{\otimes} N\longrightarrow
B(L^2(\Omega))\overline{\otimes} N,\qquad \gamma(y)=D^*yD.
$$
Since $D$ is a unitary, $\gamma$ is a trace preserving $*$-automorphism. 
Owing to the decomposition (\ref{6Dec}), we can define
$$
\Gamma=I_{N_-}\overline{\otimes} \gamma\overline{\otimes}I_{N_+}\colon \M\longrightarrow \M.
$$

Now we set 
$$
U=\Gamma\circ S.
$$
By construction, $U\colon\M\to \M$ is a trace preserving $*$-automorphism. 

Let $u,v,a,b\in L^2(\Omega)$. On the one hand, by a simple induction argument, it follows from 
(\ref{2Trace}) that for all 
$k\geq 0 $, we have
$$
{\rm tr}\Bigl(\bigl[T_\phi^k(u\otimes v)\bigr]\bigl[a\otimes b\bigr]\Bigr)
=\int_{\Omega^2} \phi(s,t)^ku(t)v(s)a(s)b(t)\, dtds.
$$
On the other hand, using (\ref{1TraceForm}),
it follows from the arguments in the proof of \cite[Lemma 4.9]{Du}
and in the proof of \cite[Theorem A]{Du} coming right after it, that for all 
$k\geq 0 $, we have
\begin{equation}\label{6DilForm}
\tau_{\mathcal M}\Bigl(\bigl[U^k((u\otimes v)\otimes 1_\infty)\bigr]
\bigl[(a\otimes b)\otimes 1_\infty\bigr]\Bigr)
=\int_{\Omega^2} \phi(s,t)^k u(t)v(s)a(s)b(t)\, dtds.
\end{equation}
According to the definition of $J$, these identities imply that 
$$
\langle J_1^*U^kJ(u\otimes v),a\otimes b\rangle =\langle T_\phi^k(u\otimes v),a\otimes b\rangle.
$$
Since the linear span of all $u\otimes v$ as above is $w^*$-dense in $B(L^2(\Omega))$,
the linear span of all $a\otimes b$ as above is dense in $S^1(L^2(\Omega))$ and
$J,U$ and $T_\phi$ are $w^*$-continuous, this implies that 
$T=T_\phi$ satisfies (\ref{2Dil1}).

\begin{remark}\label{6FD} 
The reader not familiar with the approximation techniques  
in \cite{Du} used to establish (\ref{6DilForm}) should first look at the finite-dimensional case 
to gain intuition on this proof. We provide this finite-dimensional proof for convenience. Thus,
we now assume that $\Omega=\{1,\ldots,n\}$ for some integer $n\geq 1$, so that
$L^\infty(\Omega)=\ell^\infty_n$, $L^2(\Omega)=\ell^2_n$,
$B(L^2(\Omega))=M_n$ and $\M=M_n(N^\infty)$.
Then  $d\in\ell^n(N)$ is written as $d=(d_1,\ldots,d_n)$, where each $d_k\in N$
is a unitary, and $\phi=[\phi(i,j)]_{1\leq i,j\leq n}$ is given by
$$
\phi(i,j)=\tau_N(d_i^*d_j),\qquad 1\leq i,j\leq n.
$$
Further $\gamma\colon M_n(N)\to M_n(N)$ maps any $N$-valued matrix $[y_{ij}]$ to
$[d_i^*y_{ij}d_j]$.

Write $u=(u_1,\ldots,u_n)\in\ell^2_n$ and similarly for $v,a,b$.  
Then in the space $\M=M_n(N^\infty)$, we have $u\otimes v\otimes 1_\infty=[u_jv_i\cdotp 1_\infty]$.
Hence
$S(u\otimes v\otimes 1_\infty)
=(u\otimes v\otimes 1_\infty)$, hence
$$
U(u\otimes v\otimes 1_\infty) = \bigl[u_jv_i(
\cdots 1\otimes d_i^*d_j\otimes 1 \cdots)\bigr],
$$
where the $d_i^*d_j$ are in position $0$. Next, 
$$
S\bigl(U(u\otimes v\otimes 1_\infty)\bigr)=
\bigl[u_jv_i(
\cdots 1\otimes 1\otimes d_i^*d_j\otimes 1\cdots)\bigr],
$$
where the $d_i^*d_j$ are in position $1$ and
$$
U^2(u\otimes v\otimes 1_\infty) = \bigl[u_jv_i(
\cdots 1\otimes d_i^*d_j\otimes d_i^*d_j\otimes 1 \cdots)\bigr],
$$
where the $d_i^*d_j$ appear in positions $0$ and $1$. By induction, we find that
for all $k\geq 0$,
$$
U^k(u\otimes v\otimes 1_\infty) = \bigl[u_jv_i(
\cdots 1\otimes d_i^*d_j\otimes d_i^*d_j\otimes\cdots\otimes d_i^*d_j\otimes 1 \cdots)\bigr],
$$
where the $d_i^*d_j$ appear in positions $0,1,\ldots k-1$. 
We derive that 
$$
\langle U^k(u\otimes v\otimes 1_\infty),
a\otimes b\otimes 1_\infty\rangle_{{\mathcal M},{\mathcal M}_*} 
=\sum_{i,j=1}^n a_ib_ju_jv_i\tau_N(d_i^*d_j)^k,
$$
which is the requested identity (\ref{6DilForm}) in this context.
\end{remark}

\subsection{Complementary results}

\begin{remark}\label{5k1}
In the proof of the implication ``$(i) \Rightarrow (ii)$'' in Theorem \ref{1Main},
we use the identity (\ref{2Dil1}) only with $k=1$.  Consequently, the 
assertions (i) and (ii) in Theorem \ref{1Main} are also equivalent to:
\begin{itemize}
\item [(i)'] {\it There exist a tracial von Neumann algebra
$(\M,\tau_{\mathcal M})$ with a separable predual, a
$w^*$-continuous trace preserving unital
$*$-homomorphism $J\colon B(L^2(\Omega))\to \M$
and a trace preserving $*$-automomorphism $U\colon \M\to \M$ such that 
$T_\phi=J_1^* U J.$}
\end{itemize}
\end{remark}

\begin{remark}\label{3SA}
Let $\phi\in L^\infty(\Omega^2)$. It follows from Remark \ref{2Dil2}, (3), that if 
$\phi$ is a bounded Schur multiplier and 
$T_\phi\colon B(L^2(\Omega))\to B(L^2(\Omega))$
is absolutely dilatable, then $T_\phi$ is positive and unital. 
The converse is not true, that is, there exist positive unital Schur multiplier operators
which are not absolutely dilatable (see Remark \ref{5FD} below).
However it was proved in \cite[Theorem A]{Du} that if 
$T_\phi\colon B(L^2(\Omega))\to B(L^2(\Omega))$ is positive and unital,
and if $T_\phi\colon S^2(L^2(\Omega))\to S^2(L^2(\Omega))$ is
self-adjoint (equivalently, if $\phi$ is real-valued), then $T_\phi$
is absolutely dilatable. In the discrete case (see Corollary \ref{5Discrete}), 
the latter result is due to Arhancet \cite{Arh1}.
Furthermore, it follows from the proof of \cite[Theorem A]{Du}
that if $(\Omega,\mu)$ is {\it separable}, then any positive
unital self-adjoint $T_\phi\colon B(L^2(\Omega))\to B(L^2(\Omega))$
admits a separable absolute dilation.
\end{remark}

We now focus on the discrete case, that is 
$\Omega=\Ndb^*=\{1,2,\ldots\}$. An
element of $L^\infty(\Omega^2)$ is just a bounded family  ${\frak m}=\{m_{ij}\}_{i,j\geq 1}$
of complex numbers. Then ${\frak m}$ is a bounded Schur multiplier if and only if
there exists a constant $C\geq 0$ such that for all $n\geq 1$ and 
all $[a_{ij}]_{1\leq i,j\leq n}\in M_n$, 
$$
\bignorm{[m_{ij}a_{ij}]}_{M_n}\leq C 
\bignorm{[a_{ij}]}_{M_n}.
$$
Moreover for any von Neumann algebra $M$, we may identify
$L^\infty(\Omega)\overline\otimes M$
with $\ell^\infty(M)$,
the space of all bounded sequences with entries in  $M$. Likewise,
$L^\infty(\Omega)\overline\otimes L^\infty(\Omega)\overline\otimes M\simeq 
\ell^\infty_{{\mathbb N}^*\times{\mathbb N}^*}(M)$.
If $D=(d_k)_{k\geq 1}\in \ell^\infty(M)$, then $D$ is a unitary
if and only if $d_k$ is a unitary for all $k\geq 1$. Moreover,
the product $(1_j\otimes D_i^*)(1_i\otimes D_j)$ coincides with
the doubly indexed sequence $(d_i^* d_j)_{i,j\geq 1}$.
With these observations in mind, the following is a direct consequence of
Theorem \ref{1Main}.

\begin{corollary}\label{5Discrete}
For any bounded  family  ${\frak m}=\{m_{ij}\}_{i,j\geq 1}$
of complex numbers, the following assertions are equivalent.
\begin{itemize}
\item [(i)] The family ${\frak m}$ is a bounded Schur multiplier and 
$T_{\frak m}\colon B(\ell^2)\to B(\ell^2)$ admits a separable
absolute dilation.
\item [(ii)] There exist a normalized tracial von Neumann algebra $(N,\tau_N)$ 
with a separable predual
and a sequence $(d_k)_{k\geq 1}$ of unitaries of $N$ such that
$$
m_{ij} =\tau_N\bigl(d_i^*d_j\bigr), \qquad i,j\geq 1.
$$
\end{itemize}
\end{corollary}

\section{The non separable case}\label{S-NSC}
Let $(\Omega,\mu)$ be a $\sigma$-finite measure space.
If $X$ is a non separable Banach space, it is still possible to define
a space $L^\infty_\sigma(\Omega;X^*)$ of classes of
$w^*$-measurable functions $\Omega\to X^*$ in such a way that
$L^1(\Omega;X)^*\simeq L^\infty_\sigma(\Omega;X^*)$ isometrically, see 
\cite{Buk1, Buk2}. However these spaces are delicate to define and to use,
because if $d\colon\Omega\to X^*$ is a $w^*$-measurable function, then
the norm function $t\mapsto \norm{d(t)}$ may not be measurable.
Further given a von Neumann algebra $N$ and  
$w^*$-measurable functions $c,d\colon\Omega\to N$, the function
$c\odot d\colon\Omega^2\to N$ defined by (\ref{7cd}) is likely to
be non measurable. For these reasons, it seems to be out of reach to 
extend Theorem \ref{1Main} verbatim to the non separable case.

In this general setting, we will content ourselves with a version of 
Theorem \ref{1Main} stated in the language of von Neumann tensor 
products. In the next statement, we use some notations introduced in Section
\ref{Prep}, see in particular (\ref{Ref1}) and (\ref{Ref2}).
The most important implication here is ``(i)$\,\Rightarrow\,$(ii)$_1$", which
extends the implication ``(i)$\,\Rightarrow\,$(ii)" of Theorem \ref{1Main}
to the non separable case.

\begin{theorem}\label{3Main}
Let $\phi\in L^\infty(\Omega^2)$ and let $e\in L^2(\Omega)$
with $\norm{e}_2=1$. The following assertions are equivalent.
\begin{itemize}
\item [(i)] The function $\phi$ is a bounded Schur multiplier and 
$T_\phi\colon B(L^2(\Omega))\to B(L^2(\Omega))$
is absolutely dilatable.
\item [(ii)] There exist a normalized tracial von Neumann algebra $(N,\tau_N)$
acting on some Hilbert space $H$, 
and a unitary $D\in L^\infty(\Omega)\overline\otimes B(H)$
such that:
\begin{itemize}
\item [(ii)$_1$] The product $(1_t\otimes D_s^*)(1_s\otimes D_t)$
belongs to $L^\infty_s(\Omega)\overline{\otimes}L^\infty_t(\Omega)\overline{\otimes} N$
and under the identification (\ref{3Omega3}), we have
\begin{equation}\label{3TraceForm}
\phi= \bigl[I_{L^\infty(\Omega^2)}\overline{\otimes}\tau_N\bigr]
\bigl((1_t\otimes D_s^*)(1_s\otimes D_t)\bigr).
\end{equation}
\item [(ii)$_2$] The trace of $D^*(e\otimes e\otimes I_{H})D$
in $B(L^2(\Omega))\overline{\otimes} N$ is equal to $1$.
\end{itemize}
\end{itemize}
\end{theorem}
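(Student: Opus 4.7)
The plan is to mirror the proof of Theorem \ref{1Main} carried out in Subsections \ref{61} and \ref{62}, skipping the steps that essentially use the separability of $L^2(\Omega)$.

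For $(i)\Rightarrow(ii)$, I would follow Subsection \ref{61} up through Lemma \ref{4Trace}. Starting from an absolute dilation $T_\phi = J_1^*\,U\,J$, the double matrix-unit reduction of Subsection \ref{61} (which does not use separability) produces $\M = B(L^2(\Omega))\overline{\otimes}N$ with $J(z) = z\otimes 1_N$, together with a Hilbert space $H$ such that $N\subset B(H)$ as a von Neumann subalgebra and a unitary $D$ satisfying $U(z\otimes 1_N) = D^*(z\otimes I_H)D$ for all $z\in B(L^2(\Omega))$. Lemmas \ref{4D}, \ref{4Ess} and \ref{4Trace} then yield $D\in L^\infty(\Omega)\overline{\otimes}B(H)$ together with the trace formula (\ref{3TraceForm}), giving (ii)$_1$. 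Condition (ii)$_2$ follows from the trace preservation of $U$: since $U(e\otimes e\otimes 1_N) = D^*(e\otimes e\otimes I_H)D$ and $[{\rm tr}\overline{\otimes}\tau_N](e\otimes e\otimes 1_N) = \norm{e}_2^2\,\tau_N(1_N) = 1$, the element $D^*(e\otimes e\otimes I_H)D\in B(L^2(\Omega))\overline{\otimes}N$ has trace $1$. The only technical adjustment compared with Subsection \ref{61} is that the matrix-unit constructions a priori produce two Hilbert spaces $H_1,H_2$, which must be identified into a single $H$; this is handled by representing the isomorphic tracial von Neumann algebras $N_1\cong N_2$ on a common amplification.

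For $(ii)\Rightarrow(i)$ I would follow Subsection \ref{62}. Set $N^\infty = \overline{\otimes}_{\Zdb}N$ with its natural normalized product trace, let $\M = B(L^2(\Omega))\overline{\otimes}N^\infty$, and define $J(z) = z\otimes 1_\infty$ for $z\in B(L^2(\Omega))$. Define $U = \Gamma\circ S$ where $S$ is the shift on the $\Zdb$-indexed factors of $N^\infty$, and $\Gamma$ acts via $\gamma(y) = D^*yD$ on the $0$-th factor $B(L^2(\Omega))\overline{\otimes}N\subset\M$. Once $U$ is shown to be a trace preserving $*$-automorphism of $\M$, the dilation identity $T_\phi^k = J_1^*\,U^k\,J$ follows from (\ref{3TraceForm}) by the finite-dimensional combinatorial computation of Remark \ref{6FD}, extended via the $w^*$-continuity and approximation techniques of \cite{Du}.

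The main obstacle is verifying that $\gamma$ defines a trace preserving $*$-automorphism of $B(L^2(\Omega))\overline{\otimes}N$ and not merely of the ambient algebra $B(L^2(\Omega))\overline{\otimes}B(H)$. By Lemma \ref{4N}, condition (ii)$_1$ is precisely the statement that $\gamma$ sends $B(L^2(\Omega))\otimes I_H$ into $B(L^2(\Omega))\overline{\otimes}N$, and condition (ii)$_2$ fixes the trace of the distinguished element $\gamma(e\otimes e\otimes I_H)$. The delicate step is to upgrade these to $\gamma(B(L^2(\Omega))\overline{\otimes}N) = B(L^2(\Omega))\overline{\otimes}N$ together with full trace preservation, which in the separable setting of Subsection \ref{62} was immediate from $D$ belonging to $L^\infty(\Omega)\overline{\otimes}N$; in the absence of a pointwise representation $d(t)$ of $D$, controlling $\gamma(I_{L^2(\Omega)}\otimes n)$ for $n\in N$ is the most subtle part of the argument and will require careful combination of (ii)$_1$, (ii)$_2$, and the slice-map property.
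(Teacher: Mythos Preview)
Your plan for $(i)\Rightarrow(ii)$ is essentially the paper's: follow Subsection~\ref{61} through Lemma~\ref{4Trace}, then amplify to merge $H_1$ and $H_2$ into a single $H$. The paper does exactly this, tensoring by a large $K$ so that $H_1\stackrel{2}{\otimes}K\simeq H_2\stackrel{2}{\otimes}K$ via some unitary $W$, and setting $\widetilde{D}=[I_{L^\infty(\Omega)}\overline{\otimes}W^*](D\otimes I_K)$ with $N=N_1\otimes I_K$. Condition (ii)$_2$ then follows from the trace preservation of $\pi^{-1}$ rather than of $U$ itself, but your idea is right.

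Your plan for $(ii)\Rightarrow(i)$ has a genuine gap, and you have in fact located it yourself. In the separable Theorem~\ref{1Main}, the unitary $D$ lies in $L^\infty(\Omega)\overline{\otimes}N$, so $\gamma(y)=D^*yD$ trivially preserves $B(L^2(\Omega))\overline{\otimes}N$. Here $D$ is only assumed to lie in $L^\infty(\Omega)\overline{\otimes}B(H)$, and conditions (ii)$_1$, (ii)$_2$ do \emph{not} force $\gamma$ to be an automorphism of $B(L^2(\Omega))\overline{\otimes}N$: they say nothing about $D^*(I\otimes n)D$ for general $n\in N$. So the step you flag as ``most subtle'' cannot be carried out, and the construction with $N^\infty=\overline{\otimes}_{\Zdb}N$ does not go through.

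The paper's remedy is to give up on $\gamma$ preserving $B(L^2(\Omega))\overline{\otimes}N$ and instead identify its \emph{image}. One sets $\pi(y)=DyD^*$ and $B=\pi\bigl(B(L^2(\Omega))\overline{\otimes}N\bigr)$; condition (ii)$_1$ together with Lemma~\ref{4N} gives $B(L^2(\Omega))\otimes I_H\subset B$, and a matrix-unit argument (\cite[Proposition IV.1.8]{Tak}) shows $B=B(L^2(\Omega))\overline{\otimes}M$ for a \emph{new} von Neumann algebra $M\subset B(H)$. Condition (ii)$_2$ is then exactly what is needed to see that the transported trace on $B$ is ${\rm tr}\overline{\otimes}\tau_M$ for a normalized trace $\tau_M$. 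The dilation is built on the asymmetric product
\[
\M=B(L^2(\Omega))\overline{\otimes}M_-\overline{\otimes}M\overline{\otimes}N\overline{\otimes}N_+,
\qquad M_-=\mathop{\overline{\otimes}}_{k\leq -1}M,\quad N_+=\mathop{\overline{\otimes}}_{k\geq 1}N,
\]
with $U$ assembled from a one-sided shift $\sigma_-$ on the $M$-factors, a one-sided shift $\sigma_+$ on the $N$-factors, and the trace preserving $*$-isomorphism $\gamma\colon B(L^2(\Omega))\overline{\otimes}M\to B(L^2(\Omega))\overline{\otimes}N$ given by $\gamma(y)=D^*yD$. This two-algebra construction is the missing idea.
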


\begin{proof} 
\underline{$(i)\,\Rightarrow\,(ii)$}: Assume (i) and apply the construction devised in Subsection \ref{61}.  
Note that until Lemma \ref{4Trace}, this construction does not use separability.
Recall from Lemma  \ref{4D} that we are working with a unitary 
$D\in L^\infty(\Omega)\overline{\otimes} B(H_1,H_2)$.
Let $K$ be a (large enough) Hilbert space such that $H_1\stackrel{2}{\otimes} K$
is isomorphic to $H_2\stackrel{2}{\otimes} K$ and let
$L\colon H_1\stackrel{2}{\otimes} K\to H_2\stackrel{2}{\otimes} K$ be a unitary.
We define 
$$
\widetilde{D}=[I_{L^\infty(\Omega)}\overline{\otimes} L^*](D\otimes I_K)
\,\in L^\infty(\Omega)\overline{\otimes}
B(H_1\stackrel{2}{\otimes} K),
$$
where $D\otimes I_K$ is regarded as an element of  $L^\infty(\Omega)\overline{\otimes}
B(H_1\stackrel{2}{\otimes} K,H_2\stackrel{2}{\otimes} K)$. 
Clearly $\widetilde{D}$ is a unitary.
We will now check that the assertion (ii) of Theorem \ref{3Main} is satisfied
with the Hilbert space $H=H_1\stackrel{2}{\otimes} K$, the von Neumann
algebra 
$$
N=N_1\otimes I_K\subset B(H),
$$
the tracial state $\tau_N$ on
$N$ defined by $\tau_N(x\otimes I_K)=\tau_1(x)$ for all $x\in
N_1$ and the unitary
$\widetilde{D}$ in place of $D$.

First we have $(1_s\otimes \widetilde{D}_s^*)(1_s\otimes \widetilde{D}_t)
=(1_s\otimes {D}_s^*)(1_s\otimes {D}_t)\otimes I_K$,
because $LL^*=I_{H_2}\otimes I_K$. Hence by Lemma \ref{4Ess},
the product $(1_s\otimes \widetilde{D}_s^*)(1_s\otimes \widetilde{D}_t)$
belongs to $L^\infty_s(\Omega)\overline{\otimes}L^\infty_t(\Omega)\overline{\otimes} N$. Moreover 
we have
$$
\bigl[I_{L^\infty(\Omega^2)}\overline{\otimes}\tau_N\bigr]
\bigl((1_t\otimes \widetilde{D}_s^*)(1_s\otimes \widetilde{D}_t)\bigr)= 
\bigl[I_{L^\infty(\Omega^2)}\overline{\otimes}\tau_{N_1}\bigr]
\bigl((1_t\otimes D_s^*)(1_s\otimes D_t)\bigr).
$$
Applying Lemma \ref{4Trace}, we deduce that the assertion (ii)$_1$ is satisfied.

Next we note, using again  $LL^*=I_{H_2}\otimes I_K$, that
$$
\widetilde{D}^*(e\otimes e\otimes I_H)\widetilde{D}
=D^*(e\otimes e\otimes I_{H_2})D\otimes I_K
=\pi^{-1}(e\otimes e\otimes 1_{N_2}) \otimes I_K,
$$
where $\pi$ comes from (\ref{4pi0}). Since $\pi$ is trace preserving,
the trace of  $\widetilde{D}^*(e\otimes e\otimes I_H)\widetilde{D}$
in $B(L^2(\Omega))\overline{\otimes} N$ is equal to the trace
of $e\otimes e\otimes 1_{N_2}$ in  $B(L^2(\Omega))\overline{\otimes} N_2$.
The latter is equal to $1$ (recall that $\tau_2$ is normalized).
Hence the assertion (ii)$_2$ is satisfied.

\smallskip
\underline{$(ii)\,\Rightarrow\,(i)$}:
Assume (ii) and consider the $*$-automorphism 
$$
\pi\colon B(L^2(\Omega))\overline{\otimes} B(H)\longrightarrow
B(L^2(\Omega))\overline{\otimes} B(H),
\qquad \pi(y)=DyD^*.
$$
Let $B=\pi\bigl(B(L^2(\Omega))\overline{\otimes}N\bigr)$. This
is a von Neumann subalgebra
of $B(L^2(\Omega))\overline{\otimes} B(H)$. We equip it with the trace $\tau_B
=({\rm tr}\overline{\otimes} \tau_N)\circ\pi^{-1}$, so that the restriction 
$B(L^2(\Omega))\overline{\otimes}N\to B$ of $\pi$
is trace preserving by nature.

For any $u,v\in L^2(\Omega)$, $D^*(u\otimes v\otimes I_H)D$ belongs to 
$B(L^2(\Omega))\overline{\otimes} N$, by the assumption (ii)$_1$ and
Lemma \ref{4N}. (Note that this is the reason why it makes sense to
consider $D^*(e\otimes e\otimes I_{H})D$ as an element of 
$B(L^2(\Omega))\overline{\otimes} N$ in (ii)$_2$.)
Since $D$ is a unitary, we have
$u\otimes v\otimes I_H =\pi(D^*(u\otimes v\otimes I_H)D)$, hence $u\otimes v\otimes I_H$
belongs to $B$. Since the linear span of all the $u\otimes v$ as
above is $w^*$-dense in $B(L^2(\Omega))$, we deduce that
$$
B(L^2(\Omega))\otimes I_H\subset B\subset 
B(L^2(\Omega))\overline{\otimes} B(H).
$$
Let $\{E_{ij}\,:\, (i,j)\in I^2\}$ be a system of matrix units
in $B(L^2(\Omega))$. Then it follows from above
that $\{E_{ij}\otimes I_H \,:\,(i,j)\in I^2\}$ is a system of matrix units
in $B$. Using again \cite[Proposition IV.1.8]{Tak}
and its proof, we obtain the existence of a von Neumann
subalgebra $M\subset B(H)$ such that $B=B(L^2(\Omega))\overline{\otimes} M$.
According to the assumption (ii)$_2$ and the definition of $\tau_B$, we have
$$
\tau_B(e\otimes e\otimes 1_M) = ({\rm tr}\overline{\otimes} \tau_N)
\bigl(D^*(e\otimes e\otimes I_H)D\bigr)=1.
$$
This implies that $\tau_M\colon m\mapsto \tau_B(e\otimes e\otimes m)$ is a well-defined
tracial state on $M$.  A classical uniqueness argument shows that
$\tau_B={\rm tr}\overline{\otimes} \tau_M$.

We can now adapt the proof given in Subsection \ref{62}, as follows.
We set
$$
M_- = \mathop{\overline{\otimes}}_{k\leq -1} M
\qquad\hbox{and}\qquad N_+ = \mathop{\overline{\otimes}}_{k\geq 1} N.
$$
More precisely, $M_-$  is the infinite tensor product of $(M,\tau_M)$
over the index set $\{-1,-2,\ldots\}$ whereas $N_+$  is the infinite tensor product of $(N,\tau_N)$
over the index set $\{1,2,\ldots\}$. Then we define
$$
\M=B(L^2(\Omega))\overline{\otimes}M_-\overline{\otimes}M\overline{\otimes} N\overline{\otimes}N_+,
$$
that we equip with its natural trace.
We define $J\colon B(L^2(\Omega))\to \M$ by $J(z)=z\otimes 1$, where $1$ is the unit of
$M_-\overline{\otimes}M\overline{\otimes} N\overline{\otimes}N_+$.
We set
$$
\M_1 = M_-\overline{\otimes}\bigl(B(L^2(\Omega))
\overline{\otimes}M\bigr)\overline{\otimes}N\overline{\otimes}N_+
\qquad\hbox{and}\qquad
\M_2 =  M_-\overline{\otimes}M\overline{\otimes}
\bigl(B(L^2(\Omega))
\overline{\otimes}N\bigr)\overline{\otimes}N_+,
$$
and we let 
$\kappa_1\colon\M\to\M_1$ and $\kappa_2\colon\M\to\M_2$
be the canonical $*$-automorphisms.  We let
$\sigma_-\colon M_-\to M_-\overline{\otimes}M$ be the $*$-isomorphism 
such that
$$
\sigma_-\Bigl(\mathop{\otimes}_{k\leq -1} x_k \Bigr)=\Bigl(\mathop{\otimes}_{k\leq -1} x_{k-1}\Bigr)\otimes x_{-1},
$$
for all finite families $(x_k)_{k\leq -1}$ in $M$.  Likewise, we let $\sigma_+\colon N\overline{\otimes} N_+
\to  N_+$ be the $*$-isomorphism 
such that
$$
\sigma_+\Bigl(x_0\otimes\Bigl(\mathop{\otimes}_{k\geq 1} x_k \Bigr)\Bigr)=\mathop{\otimes}_{k\geq 1} x_{k-1},
$$
for all finite families $(x_k)_{k\geq 0}$ in $N$.  These two $*$-isomorphisms are trace preserving.

It follows from the construction of $M$ that we may define a
$*$-isomorphism
$$
\gamma\colon B(L^2(\Omega))
\overline{\otimes}M\longrightarrow B(L^2(\Omega))
\overline{\otimes}N,
\qquad \gamma(y)=D^*yD.
$$
This is the inverse of the restriction  of $\pi$ to
$B(L^2(\Omega))\overline{\otimes}N$, hence it is trace preserving.

Finally, we define $U\colon\M\to \M$ by
$$
U = \kappa_2^{-1}\circ\bigl(\sigma_-\overline{\otimes}\gamma\overline{\otimes}\sigma_+\bigr)\circ\kappa_1.
$$
Then $U$ is a trace preserving $*$-automorphism. Next arguing again
as in \cite{Du}, we obtain that (\ref{6DilForm}) holds true for all
$u,v,a,b\in L^2(\Omega)$. As in Subsection \ref{62}, we deduce that
$T=T_\phi$ satisfies (\ref{2Dil1}). 
\end{proof}

We finally go back to the discrete case and give a complement to 
Corollary \ref{5Discrete}.

\begin{corollary}\label{5Discrete2}
For any bounded  family  ${\frak m}=\{m_{ij}\}_{i,j\geq 1}$
of complex numbers, the following assertions are equivalent.
\begin{itemize}
\item [(i)] The family ${\frak m}$ is a bounded Schur multiplier and 
$T_{\frak m}\colon B(\ell^2)\to B(\ell^2)$
is absolutely dilatable.
\item [(ii)] There exist a normalized tracial von Neumann algebra $(N,\tau_N)$ 
and a sequence $(d_k)_{k\geq 1}$ of unitaries of $N$ such that
$$
m_{ij} =\tau_N\bigl(d_i^*d_j\bigr), \qquad i,j\geq 1.
$$
\item [(iii)] The family ${\frak m}$ is a bounded Schur multiplier and 
$T_{\frak m}\colon B(\ell^2)\to B(\ell^2)$ admits a separable
absolute dilation.
\end{itemize}
\end{corollary}

\begin{proof}
Assume (i) and apply Theorem \ref{3Main} to ${\frak m}$. 
We find a normalized tracial von Neumann algebra $(N,\tau_N)$ 
acting on some $H$ and a sequence $(d_k)_{k\geq 1}$ of unitaries of $B(H)$
such that $d_i^*d_j\in N$ and
$m_{ij} =\tau_N\bigl(d_i^*d_j\bigr)$ for all $i,j\geq 1$.
Set $d'_k=d_1^*d_k$, $k\geq 1$. The $d'_k$ are unitaries
of $N$ and $d_i'^*d_j'=d_i^*d_j$ for all $i,j\geq 1$. 
Hence $m_{ij} =\tau_N\bigl(d_i'^*d_j'\bigr)$, for all $i,j\geq 1 $. This
proves (ii).

Assume (ii) and let $\widetilde{N}\subset N$ be the von Neumann subalgebra 
generated by $(d_k)_{k\geq 1}$. Then $\widetilde{N}$ admits 
a sequence $(a_n)_{n\geq 1}$ such that
$V:={\rm Span}\{a_n\, :\, n\geq 1\}$ is $w^*$-dense in $\widetilde{N}$. 
Let $\tau_{\widetilde{N}}$ be the restriction
of $\tau_N$ to $\widetilde{N}$ and recall that we have a contractive embedding
$\widetilde{N}\subset L^1(\widetilde{N},\tau_{\widetilde{N}})$.
If $b\in \widetilde{N}$ is such that $\tau_{\widetilde{N}}(a_nb)=0$ for all $n\geq 1$,
then as an element of $L^1(\widetilde{N},\tau_{\widetilde{N}})$, $b$ belongs to $V_\perp$.
Hence $b=0$. In turn, this shows that $V$ is dense in
$L^1(\widetilde{N},\tau_{\widetilde{N}})$. Thus, $\widetilde{N}$ has a separable predual.
By  Corollary \ref{5Discrete},  the family ${\frak m}$ therefore
satisfies (iii).
The remaining assertion $``(iii)\,\Rightarrow\,(i)"$ is obvious.
\end{proof}

\begin{remark}\label{5FD}
In the finite dimensional case, we find similarly that the Schur multiplier operator $T_{\frak m}\colon M_n\to M_n$
associated with a family ${\frak m}=\{m_{ij}\}_{1\leq i,j\leq n}$ admits a separable absolute dilation
if and only if it admits an absolute dilation, if and only if
there exist a normalized tracial von Neumann algebra $(N,\tau_N)$ 
and   unitaries $d_1,\ldots,d_n$ in $N$ such that
$m_{ij} =\tau_N\bigl(d_i^*d_j\bigr)$ for all $1\leq i,j\leq n$.
The latter equivalence result is implicit in \cite{HM}. Indeed it follows from Proposition
2.8 and Theorem 4.4 in this reference.

Taking this equivalence into account, \cite[Example 3.2]{HM} exhibits a positive unital Schur multiplier 
$M_4\to M_4$ which is not absolutely dilatable.

We also mention that there are plenty of absolutely dilatable Schur multiplier
operators $T_{\frak m}\colon M_n\to M_n$ such that ${\frak m}$ is not real valued.
For example, it follows from above that for any complex number $\omega$ with
$\vert\omega\vert =1$, the operator $T_{\frak m}\colon M_2\to M_2$
associated with
$${\frak m}=\left[\begin{matrix} 1 & \omega \\ \overline{\omega} & 1\end{matrix}\right]$$
is absolutely dilatable.
\end{remark}

\smallskip\noindent
{\bf Acknowledgements.} Both authors are supported by the ANR project 
{\it Noncommutative analysis on groups and quantum groups} 
(No./ANR-19-CE40-0002). We thank the referee for several comments which 
improved the introduction of the paper.

\bibliographystyle{abbrv}

\begin{thebibliography}{99}
\bibitem{A} M. A. Akcoglu, {\it A pointwise ergodic theorem in $L_p$-spaces}, 
Canadian J. Math. 27 (1975), no. 5, 1075–1082.
\bibitem{AS}  M. A. Akcoglu and L. Sucheston, {\it Dilations of positive contractions on $L_p$-spaces},
Canad. Math. Bull. 20 (1977), no. 3, 285–292. 
\bibitem{AD} C. Anantharaman-Delaroche,  {\it On ergodic theorems for free group actions on noncommutative spaces},
Probab. Theory Related Fields 135 (2006), no. 4, 520–546.
\bibitem{Arh1}  C. Arhancet, 
{\it On Matsaev's conjecture for contractions on noncommutative $L_p$-spaces}, 
J. Operator Theory 69 (2013), no. 2, 387–421.
\bibitem{Arh2}  C. Arhancet,
{\it Dilations of semigroups on von Neumann algebras and noncommutative $L_p$-spaces},
J. Funct. Anal. 276 (2019), no. 7, 2279–2314.
\bibitem{Arh4} C. Arhancet, {\it 
Dilations of markovian semigroups of Fourier multipliers on locally compact groups},
Proc. Amer. Math. Soc. 148 (2020), 2551-2563.
\bibitem{Arh3} C. Arhancet, {\it Dilations of markovian semigroups of measurable Schur multipliers}, Canadian J. Math., to appear. (See also arXiv:1910.14434.)
\bibitem{AFL} C. Arhancet, S. Fackler and C. Le Merdy,
{\it Isometric dilations and $H^\infty$-calculus for bounded analytic semigroups and Ritt operators},
Trans. Amer. Math. Soc. 369 (2017), no. 10, 6899–6933. 
\bibitem{AK} C. Arhancet and C. Kriegler, {\it Riesz transforms, Hodge-Dirac operators and functional calculus for multipliers}, 
Lecture Notes in Math., 2304
Springer, Cham, 2022, xii+278 pp.
\bibitem{Buk1} A. V. Bukhvalov, 
{\it The analytic representation of operators with an abstract norm (Russian)}, 
Izv. Vysš. Učebn. Zaved. Matematika no. 11 (1975), 21–32.
\bibitem{Buk2}  A. V. Bukhvalov, 
{\it Hardy spaces of vector-valued functions (Russian)},
Zap. Naučn. Sem. Leningrad. Otdel. Mat. Inst. Steklov (LOMI) 65 (1976), 5–16.
\bibitem{CP} P. Clément and J. Pr\"uss, 
{\it Completely positive measures and Feller semigroups}, 
Math. Ann. 287 (1990), no. 1, 73–105.
\bibitem{CCL} G. Cohen, C. Cuny and M. Lin, {\it 
Almost everywhere convergence of powers of some positive
$L_p$-contractions},
J. Math. Anal. Appl. 420 (2014), no. 2, 1129–1153.
\bibitem{CLS} C. Coine, C. Le Merdy and F. Sukochev, 
{\it When do triple operator integrals take value in the trace class?}, 
Ann. Inst. Fourier (Grenoble) 71 (2021), no. 4, 1393–1448.
\bibitem{CGPT} J. M. Conde-Alonso, A. M. González-Pérez, J. Parcet
and E. Tablate, {\it 
Schur multipliers in Schatten-von Neumann classes}, Ann. of Math., to appear.
(See also arXiv:2201.05511).
\bibitem{Conway1} J. B. Conway, {\it A course in operator theory}, 
Graduate Studies in Mathematics, 21. Amer. Math. Soc., 
Providence, RI, 2000. xvi+372 pp. 
\bibitem{Conway2} J. B. Conway,
{\it A course in functional analysis},
Graduate Texts in Math., 96
Springer-Verlag, New York, 1990, xvi+399 pp.
\bibitem{Cow} M. Cowling, {\it Harmonic analysis on semigroups}, 
Ann. of Math. (2) 117 (1983), no. 2, 267-283.
\bibitem{DU} J. Diestel and J. J. Uhl,
{\it Vector measures}, Mathematical Surveys, vol. 15, American Mathematical Society, 1977. 
\bibitem{DP} N. Dunford and B. J. Pettis,  
{\it Linear operations on summable functions}, Trans. Am. Math. Soc. 47 (1940), p. 323–392. 
\bibitem{Du} C. Duquet,
{\it Dilation properties of measurable Schur multipliers and Fourier multipliers},
Positivity 26 (2022), no. 4, Paper No. 69, 41 pp.
\bibitem{EKR} E. G. Effros J. Kraus and Z.-J. Ruan, {\it On two quantized tensor products},
Operator algebras, mathematical physics, and low-dimensional topology (Istanbul, 1991), 
125–145, Res. Notes Math., 5, A K Peters, Wellesley, MA, 1993.
\bibitem{FK} T. Fack and H. Kosaki, {\it 
Generalized $s$-numbers of $\tau$-measurable operators},
Pacific J. Math. 123 (1986), no.2, 269-300.
\bibitem{Haa} U.  Haagerup, 
{\it Decomposition of completely bounded maps on operator algebras}, Unpublished, Odense University, Denmark, 1980.
\bibitem{HM} U.  Haagerup and M. Musat,
{\it Factorization and dilation problems for completely positive maps on von Neumann algebras},
Comm. Math. Phys. 303 (2011), no. 2, 555–594.
\bibitem{Hiai} F. Hiai, 
{\it Lectures on selected topics in von Neumann algebras},
EMS Series of Lectures in Mathematics. EMS Press, Berlin, 2021, viii+241 pp. 
\bibitem{HP}  M. Hieber and J. Pr\"uss, 
{\it Functional calculi for linear operators in vector-valued $L_p$-spaces via the transference principle},
Adv. Differential Equations 3 (1998), no. 6, 847–872.
\bibitem{HRW} G. Hong, S. Ray and S. Wang, {\it Maximal ergodic inequalities for some positive operators on noncommutative $L_p$-spaces}, J. London Math. Soc., to appear. (See also
arXiv:1907.12967.)
\bibitem{HVVW2} T. Hyt\"onen, J. van Neerven,  M. Veraar and L.  Weis, 
{\it Analysis in Banach spaces, Vol. II, Probabilistic
methods and operator theory}, Ergebnisse der Mathematik 
und ihrer Grenzgebiete. 3. Folge. A Series of
Modern Surveys in Mathematics, 67, Springer, Cham, 2017. xxi+616 pp.
\bibitem{JLM} M. Junge  and C. Le Merdy,
{\it Dilations and rigid factorisations on noncommutative $L_p$-spaces}, J. Funct. Anal. 249 (2007), no. 1, 220–252.
\bibitem{JLX} M. Junge,  C. Le Merdy,  and Q. Xu,
{\it $H^\infty$-functional calculus and square functions on noncommutative
$L_p$-spaces}, Astérisque No. 305 (2006), vi+138 pp.
\bibitem{JMP} M. Junge, T. Mei and J. Parcet,
{\it An invitation to harmonic analysis associated with semigroups of operators}, 
Contemp. Math., 612
American Mathematical Society, Providence, RI, 2014, 107–122.
\bibitem{JRS} M. Junge, \'E. Ricard and D. Shlyakhtenko, 
{\it Noncommutative diffusion semigroups and free probability}, to appear.
\bibitem{JX}  M. Junge and Q. Xu,
{\it Noncommutative maximal ergodic theorems},
J. Amer. Math. Soc. 20 (2007), no. 2, 385–439.
\bibitem{KR} R. V.  Kadison and J. R.  Ringrose, 
{\it Fundamentals of the theory of operator algebras, Vol. II, Advanced theory},
Graduate Studies in Mathematics, 16. Amer. Math. Soc., Providence, RI, 1997. pp. i–xxii and 399–1074.
\bibitem{Kos} H. Kosaki, {\it 
Applications of the complex interpolation method to a von Neumann 
algebra: non-commutative Lp-spaces}, J. Funct. Anal. 56 (1984), 29--78.
\bibitem{Ku} B.  K\"ummerer, 
{\it  Markov dilations on $W^*$-algebras}, 
J. Funct. Anal. 63 (1985), no. 2, 139–177. 
\bibitem{KW}  P. C. Kunstmann and L. Weis, 
{\it Maximal $L_p$-regularity for parabolic equations, Fourier multiplier theorems and 
$H^\infty$-functional calculus},
Functional analytic methods for evolution equations, 65–311, Lecture Notes in Math., 1855, Springer, Berlin, 2004. 
\bibitem{DL} V.  Lafforgue and M. de la Salle,
{\it Noncommutative $L_p$-spaces without the completely bounded approximation property}, 
Duke Math. J. 160 (2011), no. 1, 71–116.
\bibitem{LM} C. Le Merdy,
{\it $H^\infty$ functional calculus and square function estimates for Ritt operators},
Rev. Mat. Iberoam. 30 (2014), no. 4, 1149–1190.
\bibitem{LMX}  C. Le Merdy and Q. Xu,
{\it Maximal theorems and square functions for analytic operators on $L_p$-spaces}, 
J. Lond. Math. Soc. (2) 86 (2012), no. 2, 343–365. 
\bibitem{PRD} J. Parcet, \'E. Ricard and M. de la Salle,
{\it  Fourier multipliers in $SL_n(\Rdb)$},
Duke Math. J. 171 (2022), no. 6, 1235-1297.
\bibitem{Pau}  V. I. Paulsen, 
{\it  Completely bounded maps and operator algebras},
Cambridge Studies in Advanced Mathematics, 78. 
Cambridge University Press, Cambridge, 2002. xii+300 pp.
\bibitem{Pe1} V. V. Peller, {\it Hankel operators in the theory of perturbations of unitary and selfadjoint operators}, (Russian),
Funktsional. Anal. i Prilozhen. 19 (1985), no. 2, 37-51.
\bibitem{Pe2} V. V. Peller, {\it Multiple operator integrals in perturbation theory},
Bull. Math. Sci. 6 (2016), no. 1, 15–88.
\bibitem{Pis}  G. Pisier, 
{\it Similarity problems and completely bounded maps},
Lecture Notes in Mathematics, 1618. Springer-Verlag, Berlin, 2001. viii+198 pp.
\bibitem{PX}  G. Pisier and Q.  Xu, 
{\it Non-commutative $L_p$-spaces}, Handbook of the geometry of Banach spaces, Vol. 2, 1459–1517, North-Holland, Amsterdam, 2003.
\bibitem{RS} M. Reed and B. Simon, {\it Methods of modern mathematical physics, I, Functional
analysis}, 2nd ed., Academic Press Inc., 1980.
\bibitem{Ric} \'E. Ricard,  
{\it A Markov dilation for self-adjoint Schur multipliers}, 
Proc. Amer. Math. Soc. 136 (2008), no. 12, 4365–4372.
\bibitem{Sak} S. Sakai, 
{\it $C^*$-algebras and $W^*$-algebras}, 
Ergebnisse der Mathematik und ihrer Grenzgebiete, Band 60. Springer-Verlag, New York-Heidelberg, 1971. xii+253 pp.
\bibitem{Spronk} N. Spronk,
{\it  Measurable Schur multipliers and completely bounded multipliers of the Fourier algebras},
Proc. London Math. Soc. (3) 89 (2004), no. 1, 161–192. 
\bibitem{Tak} M. Takesaki,  {\it Theory of operator algebras,  I}, 
Springer-Verlag, New York-Heidelberg, 1979. vii+415 pp. 
\bibitem{Tak3} M. Takesaki,
{\it Theory of operator algebras, III}, E
ncyclopaedia of Mathematical Sciences, 127. Operator Algebras and Non-commutative Geometry, 8. Springer-Verlag, Berlin, 2003. xxii+548 pp. 
\bibitem{Terp} M. Terp, {\it $L_p$-spaces associated with von Neumann algebras},
 1981, Notes, Math. Instititute, Copenhagen University. 
\bibitem{Tom} J. Tomiyama, 
{\it Tensor products and approximation problems of $C^*$-algebras}, 
Publ. Res. Inst. Math. Sci. 11 (1975/76), no. 1, 163–183. 
\bibitem{Xu} Q. Xu, 
{\it $H^\infty$ functional calculus and maximal inequalities for semigroups of 
contractions on vector-valued $L_p$-spaces}, Int. Math. Res. Not. IMRN(2015), no. 14, 5715-5732.
\bibitem{Y} F. J.  Yeadon,  {\it Isometries of noncommutative $L_p$-spaces},
 Math. Proc. Cambridge Philos. Soc. 90 (1981), no. 1, 41–50.
\end{thebibliography}

\vskip 0.3cm

\end{document}